\documentclass[pdftex]{amsart}	
\usepackage{amsmath, amsfonts, amssymb,amscd,graphics,graphicx,color,eucal,setspace}
\usepackage{latexsym}
\usepackage{color}
\usepackage[top=3.5cm, bottom=2.5cm, left=2.5cm, right=2.5cm, includefoot]{geometry}

\usepackage[all]{xy}
\usepackage{url}
\usepackage{amscd}
\usepackage{here}
\numberwithin{equation}{section}
\usepackage{bm}
\usepackage{tikz}

\theoremstyle{plain}
\newtheorem{theorem}{Theorem}[section]
\newtheorem{lemma}[theorem]{Lemma} 
\newtheorem{corollary}[theorem]{Corollary}

\theoremstyle{definition}
\newtheorem{remark}[theorem]{Remark}
\newtheorem{example}[theorem]{Example}
\newtheorem{definition}[theorem]{Definition}

\newcommand{\C}{\mathbb C}

\newcommand{\N}{\mathbb P}
\newcommand{\GX}{\mathcal{G}_X(h)}
\newcommand{\GY}{\mathcal{G}_Y(h)}
\newcommand{\G}{\mathcal{G}}
\newcommand{\Gc}{{}^\circ \G}
\newcommand{\tG}{\widetilde{\G}}
\newcommand{\tX}{\widetilde{X}}
\newcommand{\tY}{\widetilde{Y}}

\newcommand{\Fl}{\mathrm{Fl}}

\newcommand{\mclH}{\mathcal{H}}

\newcommand{\e}{\mathbf{e}}

\definecolor{gray7}{gray}{0.7}

\newcommand{\csf}[1][h]{\mathrm{csf}_{#1}(q)}
\newcommand{\Scsf}[1][h]{\mathrm{csf}_{#1}}
\newcommand{\LLT}[1][h]{\mathrm{LLT}_{#1}(q)}
\newcommand{\Sn}[1][n]{\mathfrak{S}_{#1}}
\newcommand{\asc}{\mathrm{asc}}
\newcommand{\Asc}{\mathrm{Asc}}
\newcommand{\TbU}{T \backslash \mathrm{U}(n)}

\newcommand{\wc}{{}^\circ w}
\newcommand{\vc}{{}^\circ v}
\newcommand{\Snc}{{}^\circ \Sn}

\def\Set#1{\Setdef#1\Setdef}
\def\Setdef#1|#2\Setdef{\left\{#1\;\mathstrut\vrule\;#2\right\}}%

\begin{document}

\title{Modular law through GKM theory}
\date{\today}

\author[T. Horiguchi]{Tatsuya Horiguchi}
\address{National institute of technology, Ube college, 2-14-1, Tokiwadai, Ube, Yamaguchi,
Japan 755-8555}
\email{tatsuya.horiguchi0103@gmail.com}

\author[M. Masuda]{Mikiya Masuda}
\address{Osaka Central Advanced Mathematical Institute, Sumiyoshi-ku, Osaka 558-8585, Japan.}
\email{mikiyamsd@gmail.com}

\author[T. Sato]{Takashi Sato}
\address{Osaka Central Advanced Mathematical Institute, Sumiyoshi-ku, Osaka 558-8585, Japan.}
\email{00tkshst00@gmail.com}

\keywords{Hessenberg variety, torus action, twin, GKM theory, equivariant cohomology, modular law, chromatic symmetric function, unicellular LLT polynomial}

\subjclass[2020]{Primary: 57S12, Secondary: 05A05, 14M15}
\date{\today}

\begin{abstract}
The solution of Shareshian-Wachs conjecture by Brosnan-Chow and Guay-Paquet tied the graded chromatic symmetric functions on indifference graphs (or unit interval graphs) and the cohomology of regular semisimple Hessenberg varieties with the dot action.
A similar result holds between unicellular LLT polynomials and twins of regular semisimple Hessenberg varieties.
A recent result by Abreu-Nigro enabled us to prove these results by showing the modular law for the geometrical objects, and this is indeed done by Precup-Sommers and Kiem-Lee.
In this paper, we give elementary and simpler proofs to the modular law through GKM theory. 
\end{abstract}

\maketitle

\setcounter{tocdepth}{1}

\section{Introduction}

Let $n$ be a positive integer and $[n]$ the set of integers from $1$ to $n$.
A function $h \colon [n]\to [n]$ is called a Hessenberg function if it is non-decreasing and $h(j)\ge j$ for all $j\in [n]$.
One may think of a Hessenberg function as a Dyck path. 

A graph $G_h$ (called an indifference graph) is associated to $h$ as follows:
\begin{enumerate}
	\item
	the vertex set $V(G_h)$ is $[n]$,
	\item
	the edge set $E(G_h)$ is $\{\{i,j\}\mid j<i\le h(j)\}$. 
\end{enumerate}
Let $\N$ be the set of positive integers.
A map $\kappa \colon [n]\to \N$ is called a $\N$-coloring on $G_h$ and it is proper if $\kappa(i)\not=\kappa(j)$ whenever $\{i,j\}\in E(G_h)$.
Let $z_1,z_2,\dots$ be infinitely many variables. We set ${\mathbf z}_\kappa:=z_{\kappa(1)}z_{\kappa(2)}\cdots z_{\kappa(n)}$ for a coloring $\kappa$ on $G_h$.
Then Stanley's chromatic symmetric function $\Scsf $ of $G_h$ is defined by 
\[
	\Scsf :=\sum_{\kappa\in PC(G_h)}{\mathbf z}_\kappa
\]
where $PC(G_h)$ denotes the set of all proper $\N$-colorings on $G_h$. 
The long-standing Stanley-Stembridge conjecture is known to be equivalent to $\Scsf$ being $e$-positive, that is, when $\Scsf$ is expressed as a polynomial in the elementary symmetric functions of $z_1,z_2,\dots$, all the coefficients are non-negative. 

Shareshian-Wachs \cite{sh-wa16} introduced a graded version of $\Scsf$ as follows: 
\begin{equation}
\label{eq:csf_q}
	\csf :=\sum_{\kappa\in PC(G_h)}{\mathbf z}_\kappa q^{\asc(\kappa)}
\end{equation}
where 
\[
	\asc(\kappa):=\#\{\{i,j\}\in E(G_h)\mid j<i,\ \kappa(j)<\kappa(i)\}.
\]
They show that the coefficients of $\csf$ as a polynomial in $q$ are symmetric functions in $z_1,z_2,\dots$.
A strong version of the Stanley-Stembridge conjecture is that those coefficients of $\csf$ are all $e$-positive. 

On the other hand, a regular semisimple Hessenberg variety $X(h)$ is associated with the Hessenberg function $h$.
It is a subvariety of the flag variety $\Fl(n)$ defined by 
\[
	X(h) := \{V_\bullet =(V_1\subset V_2\subset \cdots \subset V_n=\C^n)\in \Fl(n)\mid SV_i\subset V_{h(i)}\ \text{for $i\in [n]$}\},
\]
where $S$ is a linear endomorphism of $\C^n$ with distinct eigenvalues.
It is known that $X(h)$ is nonsingular and its smooth structure is independent of the choice of $S$. 
Since we are only concerned with its cohomology, we suppress $S$ in the notation of $X(h)$. 
The cohomology $H^*(X(h))$ of $X(h)$ concentrate on even degrees
and is a graded module over the symmetric group $\Sn$ on $[n]$ (see Section \ref{sect:3}). 

The following remarkable fact, which ties seemingly unrelated two objects above, was conjectured by Shareshian-Wachs \cite{sh-wa16} and proved by Brosnan-Chow \cite{br-ch18} and Guay-Paquet \cite{guay16}. 

\begin{theorem}[\cite{br-ch18}, \cite{guay16}]
\label{thm:1}
Let the situation be as above. Then 
\[
	\omega(\csf) = \sum_{i=0}^\infty \mathrm{ch}\left(H^{2i}(X(h))\right)q^i,
\] 
where $\omega$ denotes the involution on symmetric functions sending the $i$-th elementary symmetric function to the $i$-th complete symmetric function and $\mathrm{ch}$ denotes Frobenius characteristic sending $\Sn$-modules to symmetric functions of degree $n$. 
\end{theorem}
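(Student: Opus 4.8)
The plan is to prove Theorem~\ref{thm:1} by the \emph{modular law} strategy. Write $\mathrm{ch}_q\bigl(H^*(X(h))\bigr):=\sum_{i\ge 0}\mathrm{ch}\bigl(H^{2i}(X(h))\bigr)q^i$ for the right-hand side. Both $\omega(\csf)$ and $\mathrm{ch}_q\bigl(H^*(X(h))\bigr)$ are symmetric functions of degree $n$ with coefficients in $\Z[q]$ attached to a Hessenberg function $h$ (equivalently, to its Dyck path), and, following Abreu--Nigro, such an assignment $h\mapsto(\text{symmetric function})$ is pinned down by (i) its values on the base Hessenberg functions, those for which $G_h$ is a disjoint union of complete graphs, together with (ii) the \emph{modular law}, a local three-term recursion relating its values at Hessenberg functions that agree away from a single corner of the Dyck path and differ there in the prescribed way. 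It therefore suffices to check that the two sides obey the same modular law and agree on the base cases.

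The combinatorial side is elementary: fixing the corner where the three Hessenberg functions of the modular law differ, one partitions the proper colorings $PC(G_h)$ by the restriction of $\kappa$ to the pair of vertices $a<b$ spanning the extra/missing edge, and checks that the contributions ${\mathbf z}_\kappa q^{\asc(\kappa)}$ of \eqref{eq:csf_q} assemble into the modular law with the correct powers of $q$; applying the ring involution $\omega$, which does not depend on $h$, transports the relation to $\omega(\csf)$.

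The geometric side is the heart of the matter, and the main obstacle. The torus $T$ of diagonal matrices acts on $X(h)$ with isolated fixed points indexed by $\Sn$, and $X(h)$ is a GKM variety: $H^*_T(X(h))$ is the subring of $\bigoplus_{w\in\Sn}H^*(BT)$ cut out by the edge relations of its moment graph, whose edge set is governed by $h$, while the $\Sn$-action (the dot action) permutes the fixed points. Since the moment graphs of the three Hessenberg functions of the modular law differ only along the edges at the distinguished corner, I would compare their equivariant cohomologies directly on the level of moment graphs, extracting a short exact sequence of graded $\Sn$-equivariant $H^*(BT)$-modules; the factor $1+q$ that appears in the modular law should enter as the Poincaré polynomial of a projective line $\mathbb{P}^1$, reflecting a $\mathbb{P}^1$-bundle-type relation among the relevant loci, and the remaining shift in $q$-grading should record a codimension. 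Because each $H^*_T(X(h))$ is free over $H^*(BT)$ and $H^*(X(h))$ is concentrated in even degrees, the specialization $H^*_T(X(h))\otimes_{H^*(BT)}\Q\cong H^*(X(h))$ is exact on the modules in play, so the sequence descends to ordinary cohomology, and taking Frobenius characteristic in each cohomological degree yields the modular law for $\mathrm{ch}_q\bigl(H^*(X(h))\bigr)$. The delicate points I anticipate are making the corner modification $\Sn$-equivariant in the appropriate sense, so that the comparison map respects the dot action, and verifying that the sequence of moment-graph modules is genuinely exact rather than merely an equality of graded dimensions --- this is exactly where careful GKM bookkeeping, and not a Poincaré-polynomial count, is required.

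It remains to settle the base cases by direct computation. When $G_h$ is a disjoint union of cliques, $X(h)$ decomposes into products of (partial) flag varieties and both sides are multiplicative over the blocks, so matters reduce to a single clique $h\equiv n$. There $X(h)=\Fl(n)$, and a direct computation with the dot action shows each $H^{2i}(\Fl(n))$ is a direct sum of copies of the trivial $\Sn$-representation, whence $\mathrm{ch}_q\bigl(H^*(\Fl(n))\bigr)=P_n(q)\,s_{(n)}$ with $P_n(q)=\prod_{k=1}^{n}(1+q+\cdots+q^{k-1})$ the Poincaré polynomial of $\Fl(n)$; on the other side a short count for $G_h=K_n$ gives $\csf=P_n(q)\,e_n$, so $\omega(\csf)=P_n(q)\,\omega(e_n)=P_n(q)\,s_{(n)}$, and the two coincide. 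With the base cases verified and the modular law established on both sides, the induction on Dyck paths built into Abreu--Nigro's framework completes the proof.
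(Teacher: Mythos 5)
Your overall architecture is exactly the one the paper follows: establish the modular law for $\omega(\csf)$ combinatorially, establish it for $\sum_i \mathrm{ch}\bigl(H^{2i}(X(h))\bigr)q^i$ geometrically via GKM theory, match the two sides on the base Hessenberg functions $h_1\cdots h_r$ with each $G_{h_i}$ a clique, and invoke the Abreu--Nigro uniqueness theorem (Theorem~\ref{thm:ab-ni}). The combinatorial half and the base-case computation are essentially correct as sketched (the paper proves the former in Appendix C and cites the literature for the latter).

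However, the geometric modular law --- which you yourself call ``the heart of the matter'' --- is not actually proved in your proposal. What you offer is the hope of extracting ``a short exact sequence of graded $\Sn$-equivariant $H^*(BT)$-modules'' from the three moment graphs, with the factor $1+q$ entering as the Poincar\'e polynomial of a $\mathbb{P}^1$; you then flag equivariance and exactness as ``delicate points I anticipate,'' which is an admission that the key construction is missing. Note that the three graph cohomologies sit as nested subrings $H^*_T(\G_+)\subset H^*_T(\G)\subset H^*_T(\G_-)$ of $\mathrm{Map}(\Sn,H^*(BT))$, and no short exact sequence among them yields $(1+q)F(h)=F(h_+)+qF(h_-)$ without further input: one would need, for instance, an $\Sn$-equivariant isomorphism $H^*_T(\G)/H^*_T(\G_+)\cong H^{*-2}_T(\G_-)/H^{*-2}_T(\G)$ together with freeness of these quotients over $H^*(BT)$, none of which is addressed. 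The paper's resolution is a genuinely different device and is the actual content of Sections~4--5: blow up the labeled graph $\G_+$ along $\G_-$ to obtain a graph $\tG$ with doubled vertex set $\Sn\sqcup\Snc$, which fails $2$-independence along certain $4$-gons; impose on those $4$-gons an extra sign-weighted congruence modulo $(t_{w(d)}-t_{w(d+1)})^2$ to define $H^*_T(\tG,s)$; and exhibit two direct-sum decompositions $H^*_T(\Gc)\oplus H^{*-2}_T(\G)\cong H^*_T(\tG,s)\cong H^*_T(\G_+)\oplus H^{*-2}_T(\G_-)$ (Theorem~\ref{thm:main}), from which the modular law follows after reducing modulo $(t_1,\dots,t_n)$. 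Without this auxiliary object, or an equivalent substitute such as the perverse-sheaf argument of Precup--Sommers, your outline does not close.
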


Recently, it was noticed in \cite{ma-sa23} and \cite{pr-so22} that a similar fact holds for unicellular LLT polynomials and the twin of $X(h)$.
LLT polynomials were originally introduced by Lascoux, Leclerc, and Thibon in \cite{LLT97}.
It can be seen as a $q$-deformation of the product of skew Schur functions and it is indexed by a tuple of skew Young diagrams.
An LLT polynomial is called \emph{unicellular} if each skew Young diagram in its index is a single box.
It is observed in \cite{ca-me17} that a unicellular LLT polynomial is associated with a Hessenberg function $h$ and can be expressed in terms of $\N$-coloring as 
\[
	\LLT = \sum_{\kappa\in C(G_h)}{\mathbf z}_\kappa q^{\asc(\kappa)}
\]
where $C(G_h)$ denotes the set of all $\N$-colorings on $G_h$ (the properness is not required). 

On the other hand, Ayzenberg-Buchstaber \cite{ay-bu21} introduced a closed smooth submanifold $Y(h)$ of $\Fl(n)$, which they call the twin of $X(h)$.
The twin $Y(h)$ resembles $X(h)$, e.g.\ $H^*(Y(h))$ is isomorphic to $H^*(X(h))$ as groups and $H^*(Y(h))$ is also a graded $\Sn$-module.
However, they are not isomorphic as rings and as $\Sn$-modules in general. 

\begin{theorem}[\cite{ma-sa23}, \cite{pr-so22}]
\label{thm:2}
Let the situation be as above. Then 
\[
	\LLT = \sum_{i=0}^\infty \mathrm{ch}\left(H^{2i}(Y(h))\right)q^i
\] 
where $\mathrm{ch}$ denotes Frobenius characteristic as before.
\end{theorem}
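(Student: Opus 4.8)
The plan is to combine the reduction of Abreu--Nigro with a GKM computation. Abreu--Nigro show that two families of symmetric functions indexed by Hessenberg functions coincide as soon as they obey the \emph{modular law} --- the three-term linear relation attached to every modular triple $(h^{(1)},h^{(2)},h^{(3)})$ of Hessenberg functions whose Dyck paths differ only at a single step --- and agree on a small set of ``initial'' Hessenberg functions, e.g.\ those whose indifference graph is complete. Since $\LLT$ is such a family, Theorem~\ref{thm:2} will follow once we prove that
\[
	\Psi(h):=\sum_{i\ge 0}\mathrm{ch}\bigl(H^{2i}(Y(h))\bigr)q^i
\]
obeys the modular law and takes the expected values in the initial cases. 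The latter is quick: when $G_h$ is disconnected, $Y(h)$ is, up to a compatible torus action, the product of the twins of the connected components of $G_h$, and $H^*(Y(h))$ is the induced $\Sn$-module; hence $\Psi$ is multiplicative by the product formula for the Frobenius characteristic, which reduces the initial cases to the finitely many Hessenberg functions with $G_h$ complete, for which $H^*(Y(h))$ is understood. So the heart of the matter is the modular law for $\Psi$, and this is what we extract from GKM theory.

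Recall from \cite{ay-bu21,ma-sa23} that $Y(h)$ is a GKM manifold: its $T$-fixed points are indexed by $\Sn$, its equivariant cohomology is the GKM ring $\mathcal{G}_Y(h)$, realized as a graded $\Z[t_1,\dots,t_n]$-subalgebra of $\prod_{w\in\Sn}\Z[t_1,\dots,t_n]$ cut out by congruences along the edges of the GKM graph of $Y(h)$, the dot action of $\Sn$ on $\mathcal{G}_Y(h)$ permutes the factors and the variables $t_i$ simultaneously, and $H^*(Y(h))\cong\mathcal{G}_Y(h)\otimes_{\Z[t_1,\dots,t_n]}\Z$ as a graded $\Sn$-module. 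Now fix a modular triple. The three GKM graphs share the vertex set $\Sn$, and since the Hessenberg functions are nested so are their edge sets, differing only by the one or two ``local'' edges produced by the modified step of the Dyck path; therefore
\[
	\mathcal{G}_Y(h^{(3)})\ \subseteq\ \mathcal{G}_Y(h^{(2)})\ \subseteq\ \mathcal{G}_Y(h^{(1)})
\]
is a chain of $\Sn$-stable, graded, free $\Z[t_1,\dots,t_n]$-submodules of $\prod_{w}\Z[t_1,\dots,t_n]$, so the two successive quotients are graded $\Sn$-modules. These quotients are computed directly from the GKM congruences: a new edge carries a single linear label $t_i-t_j$ and joins an explicit block of fixed points, so the corresponding quotient is a torsion module supported on the hyperplane $\{t_i=t_j\}$ whose graded $\Sn$-character is read off the combinatorics of the modular triple. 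Feeding the quotients into the two short exact sequences, applying $(-)\otimes_{\Z[t_1,\dots,t_n]}\Z$ while retaining the single $\mathrm{Tor}_1$-term of each torsion quotient (which only raises the cohomological degree by $2$, i.e.\ contributes one factor of $q$), and taking graded Frobenius characteristics, one recovers precisely the modular relation among $\Psi(h^{(1)})$, $\Psi(h^{(2)})$, $\Psi(h^{(3)})$. With the first paragraph, this completes the proof.

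The main obstacle is the middle of the second paragraph: identifying the successive quotients $\mathcal{G}_Y(h^{(1)})/\mathcal{G}_Y(h^{(2)})$ and $\mathcal{G}_Y(h^{(2)})/\mathcal{G}_Y(h^{(3)})$ as graded $\Sn$-modules accurately enough to reproduce the precise coefficients $1$, $q$, $1+q$ of the modular law. One must pin down exactly which fixed points each new local edge connects and how the dot action permutes that block; the delicate point is that the set of new edges is not itself $\Sn$-invariant, so the $\Sn$-structure of the quotient has to be extracted from the invariant filtration $\mathcal{G}_Y(h^{(3)})\subseteq\mathcal{G}_Y(h^{(2)})\subseteq\mathcal{G}_Y(h^{(1)})$ rather than edge by edge, and the degree shift produced by the label $t_i-t_j$ under $(-)\otimes_{\Z[t_1,\dots,t_n]}\Z$ must be tracked with care. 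Once these quotients are understood the modular law is a short formal computation, and running the identical argument with $X(h)$ in place of $Y(h)$ (with the involution $\omega$ appearing as in Theorem~\ref{thm:1}) gives Theorem~\ref{thm:1}.
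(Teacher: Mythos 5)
Your overall strategy is the paper's: reduce Theorem~\ref{thm:2} via Abreu--Nigro to the modular law for $\Psi(h)=\sum_i \mathrm{ch}(H^{2i}(Y(h)))q^i$ plus agreement on initial cases (the paper simply cites \cite{ki-le23} and \cite{ab-ni21-2} for the latter), and to prove the modular law by a GKM computation. But the GKM computation you outline has a genuine gap exactly where you locate it yourself. Writing $Q_1=\mathcal{G}_Y(h)/\mathcal{G}_Y(h_+)$ and $Q_2=\mathcal{G}_Y(h_-)/\mathcal{G}_Y(h)$, your two four-term exact sequences give, in the Grothendieck group of graded $\Sn$-modules, $\Psi(h)-\Psi(h_+)=\mathrm{ch}(Q_1\otimes\C)-\mathrm{ch}(\mathrm{Tor}_1(Q_1,\C))$ and similarly for $Q_2$; to land on $(1+q)\Psi(h)=\Psi(h_+)+q\Psi(h_-)$ you must prove both that $\mathrm{Tor}_1(Q_i,\C)\cong (Q_i\otimes\C)[-2]$ (which requires, e.g., freeness of $Q_i$ over $\C[t_1,\dots,t_n]/(t_d-t_{d_0})$, resp.\ $(t_{d+1}-t_{d_0})$ --- not automatic for a subquotient of the GKM ring) and that $Q_1\otimes\C\cong (Q_2\otimes\C)[-2]$ as graded $\Sn$-modules. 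This last isomorphism is the entire content of the modular law; "read off the combinatorics of the modular triple" does not establish it, and you explicitly defer it as "the main obstacle." A further caveat for your closing claim about $X(h)$: there the new edges carry labels $t_{w(d)}-t_{w(d_0)}$ depending on $w$, so the quotient is not supported on a single hyperplane and the Tor analysis is strictly harder.

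For comparison, the paper circumvents the quotient/Tor analysis entirely. It blows up the labeled graph of $h_+$ along that of $h_-$, obtaining a labeled graph $\tG$ (no longer $2$-independent) equipped with a sign function $s$ on vertices, and defines $H^*_T(\tG,s)$ by imposing, on each degenerate $4$-gon, a signed congruence modulo the square of the common label. It then writes down four explicit $\Sn$-equivariant maps and proves two direct-sum decompositions $H^*_T(\Gc)\oplus H^{*-2}_T(\G)\cong H^*_T(\tG,s)\cong H^*_T(\G_+)\oplus H^{*-2}_T(\G_-)$, from which $H^*(\G)\oplus H^{*-2}(\G)\cong H^*(\G_+)\oplus H^{*-2}(\G_-)$ and hence the modular law follow by comparing the two sides; the same argument runs verbatim for the twin graphs with the labels $t_i-t_j$ and a sign $s_Y(w)=(-1)^{l(w)}$. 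If you want to salvage your filtration approach, the concrete task is to produce the isomorphism $Q_1\otimes\C\cong(Q_2\otimes\C)[-2]$ directly; the paper's maps $\rho_!$ and $\psi_!$ (multiplication by the linear forms $x_d-x_{d_0}$, $x_{d+1}-x_{d_0}$, $x_{d+1}-x_d$) are essentially the devices that would realize it.
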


The proof of Theorem~\ref{thm:1} by Brosnan-Chow \cite{br-ch18} uses deep results in algebraic geometry and that by Guay-Paquet \cite{guay16} uses Hopf algebra on Dyck paths.
The proof of Theorem~\ref{thm:2} by Masuda-Sato \cite{ma-sa23} is based on Theorem~\ref{thm:1}.
Precup-Sommers \cite{pr-so22} prove both Theorems~\ref{thm:1} and \ref{thm:2} using intersection cohomology.
In fact, they prove that the right hand sides (i.e.\ the geometric sides) of the identities in Theorems~\ref{thm:1} and ~\ref{thm:2} satisfy the modular law.
Here a function $F$ on the set of Hessenberg functions taking values in a polynomial ring $\Lambda[q]$ over a ring $\Lambda$ is said to satisfy the modular law if
\[
	(1+q)F(h)=F(h_+)+qF(h_-)
\]
for every modular triple $(h_-,h,h_+)$ of Hessenberg functions, see Definition~\ref{dfn:modular_triple} for modular triples.
Abreu-Nigro \cite{ab-ni21} show that a function $F$ satisfying the modular law is uniquely determined by its ``initial conditions'', see Section 2 for details.
In Theorems~\ref{thm:1} and ~\ref{thm:2}, the left hand sides (i.e.\ the algebraic sides) of the identities are known to satisfy the modular law (see Appendices),
and the left-hand side and the right-hand side coincide for ``initial'' $h$'s
(see \cite[Proposition 2.14 and (2.17)]{ki-le22} and \cite[Theorem 1.1]{ab-ni21} for Theorem \ref{thm:1}, and see \cite[Lemma 3.11 and (3.20)]{ki-le23} and \cite[Theorem 2.4]{ab-ni21-2} for Theorem \ref{thm:2}).
Hence, once the modular law is established for the geometric sides, Theorems~\ref{thm:1} and ~\ref{thm:2} follow.

Recently, Kiem-Lee (\cite{ki-le22} and \cite{ki-le23}) also proved the modular law for the geometric sides.
Their proofs are elementary in the sense that their main tool is blow-up.
However they took different approaches between when proving Theorem \ref{thm:1} and when proving Theorem \ref{thm:2}.
Abreu-Nigro \cite[Example 3.5]{ab-ni22} also proved Theorem \ref{thm:1}, but they omitted the details.

In this paper, we prove the modular law for the geometric sides through GKM theory.
Our proof is motivated by the blow-up idea of Kiem-Lee and may be regarded as graph analogue of their proofs.
However ours is more elementary and simpler.
Moreover, our proofs for $X(h)$ and $Y(h)$ proceed in the same way.
Indeed, we blow-up the GKM graph of $X(h_+)$ (or $Y(h_+)$) along the GKM graph of $X(h_-)$ (or $Y(h_-)$).
The resulting graph, which corresponds to a roof manifold in \cite{ki-le23}, is a labeled graph but not a GKM graph.
We consider its graph cohomology satisfying a certain condition and compute it in two ways.
The modular law for the geometric sides follows by comparing the two expressions of the graph cohomology.
We also provide a simple elementary proof to the modular law for the algebraic sides for the reader's convenience in the appendix. 

This paper is organized as follows. 
In Section~\ref{sect:2} we explain a modular triple. We also recall the uniqueness result of Abreu-Nigro \cite{ab-ni21}.
In Section~\ref{sect:3} we briefly review GKM theory which is our main tool.
In Section~\ref{sect:4} we set up notations used for our proof of the modular law for the geometric sides.
We give the proof of the modular law for $X(h)$ in Section~\ref{sect:5}.
In Section \ref{sect:6} we point out the necessary change for the proof of the modular law for $Y(h)$.
All cohomology groups are taken with $\C$ coefficients throughout this paper unless otherwise stated. 

\section{Modular triple and modular law}
\label{sect:2}

We denote the set of all Hessenberg functions on $[n]$ by $\mclH(n)$. We often express $h\in \mclH(n)$ as a vector $(h(1),\dots,h(n))$ by listing its values.
It is also convenient to visualize $h$ by drawing a configuration of the shaded boxes on a square grid of size $n\times n$, which consists of boxes in the $i$-th row and the $j$-th column satisfying $i\le h(j)$.
Since $h(j)\ge j$ for any $j\in [n]$, the essential part is the shaded boxes below the diagonal, see Figure~\ref{fig:1} below. 

\begin{figure}[H]
\begin{center}
\scalebox{0.7}
{
\setlength{\unitlength}{5mm}
\begin{picture}(25,11)(0,0)
	\multiput(0,10.2)(0,-1){2}{\colorbox{gray7}{\phantom{\vrule width 3mm height 3mm}}}
	\multiput(1,10.2)(0,-1){5}{\colorbox{gray7}{\phantom{\vrule width 3mm height 3mm}}}
	\multiput(2,10.2)(0,-1){6}{\colorbox{gray7}{\phantom{\vrule width 3mm height 3mm}}}
	\multiput(3,10.2)(0,-1){8}{\colorbox{gray7}{\phantom{\vrule width 3mm height 3mm}}}
	\multiput(4,10.2)(0,-1){9}{\colorbox{gray7}{\phantom{\vrule width 3mm height 3mm}}}
	\multiput(5,10.2)(0,-1){9}{\colorbox{gray7}{\phantom{\vrule width 3mm height 3mm}}}
	\multiput(6,10.2)(0,-1){11}{\colorbox{gray7}{\phantom{\vrule width 3mm height 3mm}}}
	\multiput(7,10.2)(0,-1){11}{\colorbox{gray7}{\phantom{\vrule width 3mm height 3mm}}}
	\multiput(8,10.2)(0,-1){11}{\colorbox{gray7}{\phantom{\vrule width 3mm height 3mm}}}
	\multiput(9,10.2)(0,-1){11}{\colorbox{gray7}{\phantom{\vrule width 3mm height 3mm}}}
	\multiput(10,10.2)(0,-1){11}{\colorbox{gray7}{\phantom{\vrule width 3mm height 3mm}}}
	\linethickness{0.3mm}
	\multiput(1,0)(1,0){10}{\line(0,1){11}}
	\multiput(0,1)(0,1){10}{\line(1,0){11}}
	\multiput(0,11)(1,-1){11}{\line(1,-1){1}}
	\put(0,0){\framebox(11,11)}
	\multiput(14,10.2)(0,-1){5}{\colorbox{gray7}{\phantom{\vrule width 3mm height 3mm}}}
	\multiput(15,10.2)(0,-1){5}{\colorbox{gray7}{\phantom{\vrule width 3mm height 3mm}}}
	\multiput(16,10.2)(0,-1){7}{\colorbox{gray7}{\phantom{\vrule width 3mm height 3mm}}}
	\multiput(17,10.2)(0,-1){8}{\colorbox{gray7}{\phantom{\vrule width 3mm height 3mm}}}
	\multiput(18,10.2)(0,-1){8}{\colorbox{gray7}{\phantom{\vrule width 3mm height 3mm}}}
	\multiput(19,10.2)(0,-1){9}{\colorbox{gray7}{\phantom{\vrule width 3mm height 3mm}}}
	\multiput(20,10.2)(0,-1){10}{\colorbox{gray7}{\phantom{\vrule width 3mm height 3mm}}}
	\multiput(21,10.2)(0,-1){10}{\colorbox{gray7}{\phantom{\vrule width 3mm height 3mm}}}
	\multiput(22,10.2)(0,-1){10}{\colorbox{gray7}{\phantom{\vrule width 3mm height 3mm}}}
	\multiput(23,10.2)(0,-1){11}{\colorbox{gray7}{\phantom{\vrule width 3mm height 3mm}}}
	\multiput(24,10.2)(0,-1){11}{\colorbox{gray7}{\phantom{\vrule width 3mm height 3mm}}}
	\linethickness{0.3mm}
	\multiput(15,0)(1,0){10}{\line(0,1){11}}
	\multiput(14,1)(0,1){10}{\line(1,0){11}}
	\multiput(14,11)(1,-1){11}{\line(1,-1){1}}
	\put(14,0){\framebox(11,11)}
\end{picture}
}
\end{center}
\caption{The configurations for $h=(2,5,6,8,9,9,11,11,11,11,11)$ and $(5,5,7,8,8,9,10,10,10,11,11)$.}
\label{fig:1}
\end{figure}
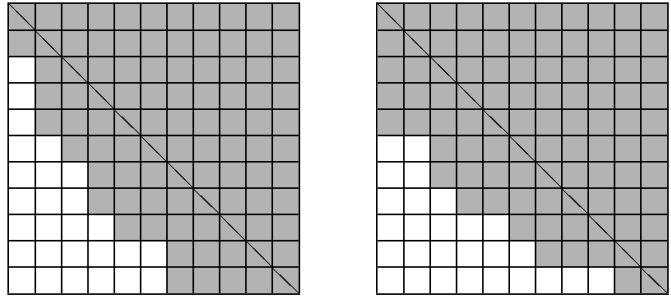

If we flip the configuration of $h$ along the anti-diagonal, then the resulting one is again a configuration of a Hessenberg function, denoted by $h^t$. We call $h^t$ the transpose of $h$. 
For example, the two Hessenberg functions in Figure~\ref{fig:1} are the transposes of each other. 

We introduce the following terminology used in \cite{ki-le23}. 
\begin{definition}[Modular triple]
\label{dfn:modular_triple}
Let $h_-,h,h_+$ be elements in $\mclH(n)$. The triple $(h_-,h,h_+)$ is called a \emph{modular triple} if it satisfies one of the following. 
\begin{enumerate}
\item[(C)] If $h(d)=h(d+1)$ and $h^{-1}(d)=\{d_0\}$ for some $1\le d_0<d<n$, then $h_-$ and $h_+$ are defined by 
\[
	h_-(j) =
	\begin{cases}
		d-1\quad &\text{for $j=d_0$}\\
		h(j) \quad&\text{otherwise}
	\end{cases}
	\qquad\text{and}\qquad 
	h_+(j)=
	\begin{cases}
		d+1\quad &\text{for $j=d_0$}\\
		h(j) \quad&\text{otherwise}.
	\end{cases}
\]
\item[(R)] If $h(d')+1=h(d'+1)\not=d'+1$ and $h^{-1}(d')=\emptyset$ for some $1\le d'<n$, then $h_-$ and $h_+$ are defined by 
\[
	h_-(j)=
	\begin{cases}
		h(d')\quad &\text{for $j=d'+1$}\\
		h(j) \quad&\text{otherwise}
	\end{cases}
	\qquad\text{and}\qquad 
	h_+(j)=
	\begin{cases}
		h(d')+1\quad &\text{for $j=d'$}\\
		h(j) \quad&\text{otherwise}.
	\end{cases}
\]
\end{enumerate}
\end{definition}

\begin{remark}
\label{rem:1}
In Definition \ref{dfn:modular_triple}, (C) stands for column and (R) stands for row.
The statements (C) and (R) seem unrelated but they are related through $h\to h^t$, e.g.\ (C) for $h^t$ implies (R) for $h$. 
\end{remark}

\begin{example}
For the left $h$ in Figure~\ref{fig:1}, $(d,d_0)=(5,2), (8,4)$ satisfy (C) while $d'=4$ satisfies (R).
For the right $h$ in Figure~\ref{fig:1}, $(d,d_0)=(7,3)$ satisfies (C) while $d'= 3, 6$ satisfy (R).
Since they are the transposes to each other,
the $(d+1)$-th column of the left $h$ corresponds to the $d'$-th row of the right $h$ by $n+1-d' = d+1$,
that is, $d + d' = n$, where $n = 11$ in this case.
In other words, $d' = w_0(d+1)$, where $w_0$ is the longest element of $\Sn$.
\end{example}

\begin{definition}
Let $\Lambda[q]$ be a polynomial ring over a ring $\Lambda$ (in fact, we take $\Lambda$ to be 
the ring of symmetric functions in $z_1,z_2,\dots$).
We say that a function $F\colon \mclH(n)\to \Lambda[q]$ satisfies the \emph{modular law} if 
\begin{equation}
\label{eq:modular_relation}
	(1+q)F(h)=F(h_+)+qF(h_-)
\end{equation}
for every modular triple $(h_-,h,h_+)$. 
\end{definition}

\begin{remark}
\label{rem:2}
If $F(h^t)=F(h)$ for any $h\in \mclH(n)$, then it suffices to check the modular relation \eqref{eq:modular_relation} for modular triples of type (C) for $F$ by Remark~\ref{rem:1}. 
\end{remark}

Given $h_1\in \mclH(n_1)$ and $h_2\in \mclH(n_2)$, their product $h_1h_2\in \mclH(n_1+n_2)$ can naturally be defined by 
\[
	h_1h_2(j) =
	\begin{cases}
		h_1(j)\quad&\text{for $1\le j\le n_1$}\\
		h_2(j-n_1)+n_1\quad&\text{for $n_1+1\le j\le n_1+n_2$}.
	\end{cases}
\] 

\begin{theorem}[Abreu-Nigro \cite{ab-ni21}]
\label{thm:ab-ni}
A function $F\colon \mclH(n)\to \Lambda[q]$ which satisfies the modular law is determined by it values at products $h_1h_2\cdots h_r$ of all tuples $(h_1,\dots,h_r)$ such that $h_i\in \mclH(n_i)$, 
$h_i(j)=n_i$ for $\forall j\in [n_i]$, and $\sum_{i=1}^r n_i=n$. 
\end{theorem}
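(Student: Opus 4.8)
The plan is to reduce the statement to a vanishing assertion and then exploit, on the one hand, that $\Lambda[q]$ is an integral domain (since $\Lambda$ is a polynomial ring) and, on the other, that the modular relations can be organized into a nondegenerate linear system. Write $\mathcal{C}\subseteq\mclH(n)$ for the set of products $h_1\cdots h_r$ with each $h_i$ complete; concretely $h\in\mathcal{C}$ if and only if $G_h$ is a disjoint union of complete graphs, equivalently $h(j)=h(j+1)$ whenever $h(j)>j$. The assertion that a modular $F$ is determined by its values on $\mathcal{C}$ is equivalent to: if $D\colon\mclH(n)\to\Lambda[q]$ satisfies the modular law and $D|_{\mathcal{C}}\equiv 0$, then $D\equiv 0$; apply this to $D=F-F'$. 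So it suffices to prove this vanishing statement.

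First I would reduce to the case that $G_h$ is connected, that is $h(j)>j$ for all $j\in[n-1]$. The key observation is that modular triples are compatible with products: if $(g_-,g,g_+)$ is a modular triple in $\mclH(k)$ and $h''\in\mclH(n-k)$, then $(g_-h'',gh'',g_+h'')$ is a modular triple in $\mclH(n)$, because conditions (C) and (R) only constrain the values of $g$ on $[k]$, which are unaffected by appending $h''$, and $(gh'')_{\pm}=g_{\pm}h''$. Hence for fixed $h''$ the map $g\mapsto D(gh'')$ on $\mclH(k)$ is modular, and likewise $g'\mapsto D(g_0 g')$ on $\mclH(n-k)$. Since a disconnected $h$ factors as $h=h'h''$ with $h'\in\mclH(k)$, $h''\in\mclH(n-k)$ for some $1\le k<n$, and since $\mathcal{C}$ is closed under products, a two-step induction on $n$ gives $D(h)=0$ for all disconnected $h$: in the inductive step one first gets $D(g_0 g_0')=0$ for $g_0\in\mathcal{C}(k)$, $g_0'\in\mathcal{C}(n-k)$ because $g_0 g_0'\in\mathcal{C}(n)$, whence $g'\mapsto D(g_0 g')\equiv 0$ by the case $n-k$, and then $g\mapsto D(gh'')\equiv 0$ by the case $k$. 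The only connected element of $\mathcal{C}$ is $(n,\dots,n)$, where $D$ vanishes by hypothesis, so it remains to handle connected $h\neq(n,\dots,n)$.

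For these $h$ I would argue with the modular relations directly, and here lies the main obstacle. One needs the combinatorial lemma that for every connected $h\neq(n,\dots,n)$ there is a modular triple of type (C) or (R) containing $h$ — proved by noting that such an $h$ is not constant, picking a position $j<n$ with $h(j)<h(j+1)$ (automatically $h(j)>j$), and doing a case analysis on the shape of the Dyck path near $j$ — and, moreover, that there are enough such triples, with enough freedom in the choice, to assign to each connected $h\neq(n,\dots,n)$ a distinct triple $\tau(h)$ in which $h$ serves as a ``pivot''. Granting this, rewrite the modular relation of each $\tau(h)$ with the values of $D$ on disconnected functions and on $(n,\dots,n)$ — all already known to vanish — moved to the right-hand side; the result is a square system $M\mathbf{x}=\mathbf{0}$ over $\Z[q]$ with $\mathbf{x}=(D(h))_h$ indexed by the connected $h\neq(n,\dots,n)$.

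It remains to show $\det M\neq 0$ in $\Z[q]$, which I would do by specializing at $q=0$. In any modular triple $(h_-,h^{\circ},h_+)$ one has $b(h_-)=b(h^{\circ})-1=b(h_+)-2$, where $b(h)=\sum_j(h(j)-j)$ is the number of boxes below the diagonal, and at $q=0$ the relation $(1+q)D(h^{\circ})=D(h_+)+qD(h_-)$ degenerates to $D(h^{\circ})=D(h_+)$. Using this and the fact that following such equalities strictly changes $b$ and terminates at a disconnected function or at $(n,\dots,n)$ — where $D$ already vanishes — one shows that $M|_{q=0}$ is, after a suitable reordering of the unknowns by $b$, triangular with $\pm1$ on the diagonal (this is where the choices of $\tau$ must be made carefully, so that the degenerate dependencies are acyclic); hence $\det M$ has constant term $\pm1$, so $\det M\neq 0$. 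Then Cramer's rule gives $\det M\cdot D(h)=0$ for every remaining $h$, the right-hand sides being $\Z[q]$-combinations of values of $D$ already known to vanish, and since $\Lambda[q]$ is a domain and $\det M\neq 0$ we conclude $D(h)=0$, completing the proof. The algebraic ingredients here are routine; the work is entirely in the combinatorics of which modular triples are available and in arranging them so that the $q=0$ specialization becomes triangular.
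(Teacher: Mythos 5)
The paper itself does not prove this statement; it is imported from Abreu--Nigro \cite{ab-ni21}, so there is no in-paper proof to compare against, and your proposal must stand on its own. It does not: there is a genuine gap at the central combinatorial step. Your reduction to the vanishing statement, and the reduction to connected $h$ via compatibility of modular triples with products, are both sound. But the lemma you then require --- that every connected $h\neq(n,\dots,n)$ occurs as the \emph{middle} element (your ``pivot'') of some modular triple --- is false. Take $n=5$ and $h=(3,3,5,5,5)$: it is connected and not complete, yet no $(d,d_0)$ satisfies (C) (for $d=3$ one has $h^{-1}(3)=\{1,2\}$, for $d=4$ one has $h^{-1}(4)=\emptyset$, and $d=1$ forces $d_0<1$), and no $d'$ satisfies (R) (the indices with $h^{-1}(d')=\emptyset$ are $d'\in\{1,2,4\}$, and none of them has $h(d'+1)=h(d')+1$). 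Hence the square system $M\mathbf{x}=\mathbf{0}$ --- one equation per connected non-complete $h$, with $h$ as the pivot of its own equation --- cannot be assembled, and your $q=0$ triangularization via $h\mapsto h_+$ has no row to place on the diagonal for such $h$.

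This is not a cosmetic defect. An $h$ of this kind can only enter the modular relations in the role of the top element: for example $(3,3,5,5,5)=g_+$ for the pivot $g=(3,3,4,5,5)$ with $g_-=(3,3,3,5,5)$, so the usable relation is $D(h)=(1+q)D(g)-qD(g_-)$, whose $q=0$ degeneration $D(h)=D(g)$ points in the opposite direction with respect to the box statistic $b$ ($b(g)=b(h)-1$). Once pivot-rows and top-rows are mixed, the uniform orientation by $b$ that makes your matrix triangular at $q=0$ is lost, and one has to prove that a consistent, acyclic assignment of one relation to each unknown exists. That existence and ordering argument is precisely the nontrivial content of Abreu--Nigro's proof; your proposal treats it as a routine case analysis ``near a position $j$ with $h(j)<h(j+1)$'', which the counterexample above shows cannot work as stated.
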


%

\section{Graph cohomology of a labeled graph and GKM theory}
\label{sect:3}

Let $T$ be the compact torus $(S^1)^n$ where $S^1$ is the unit circle of $\C$.
The classifying space $BT$ of $T$ is $(\C P^\infty)^n$.
We choose a generator of $H^2(\C P^\infty)$ obtained as the first Chern class of the tautological line bundle over $\C P^\infty$ and let $t_1,\dots,t_n$ be a generator of $H^2(BT)$ coming from the factors of $BT=(\C P^\infty)^n$.
Then $H^*(BT)$ is a polynomial ring in $t_1,\dots,t_n$. 

Let $\Gamma=(V,E,\alpha)$ be a \emph{labeled graph}, where $V$ is a vertex set, $E$ is an edge set, and $\alpha\colon E\to H^2(BT)$ is a labeling on $E$. 
For $e\in E$, we denote by $e_{\pm}$ the endpoints of $e$.
Then we define the (equivariant) \emph{graph cohomology} of the labeled graph $\Gamma$ by 
\begin{equation}
\label{eq:graph_cohomology}
	H^*_T(\Gamma)
	:= \{f\in \mathrm{Map}(V,H^*(BT))\mid f(e_+)\equiv f(e_-)\pmod{\alpha(e)}
		\quad\text{for any $e\in E$}\}.
\end{equation}
Any constant map on $V$ taking a value $t$ in $H^*(BT)$ is an element of $H^*_T(\Gamma)$, which we also denote by $t$, and $H^*_T(\Gamma)$ is a module over $H^*(BT)$ in a natural way. We define 
\begin{equation}
\label{eq:H*graph} 
	H^*(\Gamma):=H^*_T(\Gamma)/(t_1,\dots,t_n)
\end{equation}
where $(\ )$ denotes the ideal generated by the elements in it. 

Graph cohomology often arises as the equivariant cohomology 
\[
	H^*_T(X):=H^*(ET\times_T X)
\]
of a nice $T$-space $X$ called a GKM manifold (see \cite{gu-za01}), where $ET\to BT$ is the universal principal $T$-bundle and $ET\times_T X$ is the orbit space of $ET\times X$ by the diagonal $T$-action.
The first projection $ET\times X\to ET$ induces a fibration 
\[
	X \to ET\times_T X \xrightarrow{\pi} ET/T=BT,
\]
so $H^*_T(X)$ is a module over $H^*(BT)$ through $\pi^*\colon H^*(BT)\to H^*_T(X)$.
If $H^*(X)$ concentrate on even degrees, then the Serre spectral sequence of the fibration above collapses;
so the restriction map $H^*_T(X)\to H^*(X)$ is surjective and its kernel is the ideal generated by $\pi^*(H^{>0}(BT))$, so that we obtain an isomorphism 
\begin{equation}
\label{eq:H*X}
	H^*_T(X)/(\pi^*(t_1),\dots,\pi^*(t_n))\cong H^*(X).
\end{equation}

We say that the labeled graph $\Gamma$ is \emph{$2$-independent} if, for any $p\in V$ and edges $e_1,\dots,e_m$ incident to $p$, $\alpha(e_1),\dots,\alpha(e_m)$ are pairwise linearly independent.
The equivariant cohomology ring of a GKM manifold $X$ is recovered from its fixed point set and $1$-dimensional orbits, and they form a $2$-independent labeled graph called a GKM graph (see \cite{gu-za01} for details).

Recall that
\begin{equation}
\label{eq:Xh}
	X(h)=\{V_\bullet =(V_1\subset V_2\subset \cdots
	\subset V_n=\C^n)\in \Fl(n)\mid SV_i\subset V_{h(i)}\ \text{for $i\in [n]$}\}
\end{equation}
where $S$ is a linear endomorphism of $\C^n$ with distinct eigenvalues.
As remarked in the introduction, the diffeomorphism type of $X(h)$ is independent of the choice of $S$.
We take $S$ to be a linear operator defined by a diagonal matrix with distinct eigenvalues.
For later use, we assume that they are real numbers.
Then $S$ commutes with the standard action of $T=(S^1)^n$ on $\C^n$ defined by coordinate-wise multiplication,
so the induced $T$-action on $\Fl(n)$ leaves $X(h)$ invariant.
One can easily check that the $T$-fixed point sets $X(h)^T$ and $\Fl(n)^T$ consist of permutation flags $V_\bullet(w)$ associated with elements $w\in \Sn$;
\[
	V_\bullet(w) = (\langle \e_{w(1)}\rangle \subset \langle \e_{w(1)},\e_{w(2)}\rangle \subset \cdots\subset \langle\e_{w(1)},\dots,\e_{w(n)}\rangle=\C^n),
\]
where $\e_1,\dots,\e_n$ denotes the standard basis of $\C^n$ and $\langle \ \rangle$ denote the linear subspace of $\C^n$ spanned by the elements in it.
In the following, we make the following identification 
\[
	X(h)^T=\Fl(n)^T=\Sn.
\]

The GKM graph $\GX=(\Sn,E(h),\alpha_X)$ associated to $X(h)$ with the $T$-action is given by 
\[
	E(h)=\{\{w,w(i,j)\}\mid w\in \Sn,\ j<i\le h(j)\}
\]
and
\begin{equation}
\label{eq:alpha_X}
	\alpha_X(\{w,w(i,j)\})=t_{w(i)}-t_{w(j)},
\end{equation}
where $(i,j)$ denotes the transposition exchanging $i$ and $j$, see \cite{tymo08}.
Then $H^*_T(\GX)$ consists of all $f\in \mathrm{Map}(V,H^*(BT))$ satisfying
\begin{equation*}
	f(w) - f(w(i,j)) \equiv 0 \mod (t_{w(i)} - t_{w(j)})
\end{equation*}
for any $i < j \leq h(i)$ and $w \in \Sn$.
Note that any edge $\{w,w(i,j)\}$ of $\GX$ corresponds to $\C P^1$ in $X(h)$ which contains $V_\bullet(w)$ and $V_\bullet(w(i,j))$, see \cite[Subsection 2.2]{ki-le22}.
It is known that $H^*(X(h))$ concentrate on even degrees (see \cite{ma-pr-sh92}), so the restriction map 
\[
	\iota^*\colon H^*_T(X(h))\to H^*_T(X(h)^T)
	=\bigoplus_{w\in \Sn}H^*_T(w)=\mathrm{Map}(\Sn,H^*(BT))
\]
is injective.
GKM theory (\cite{go-ko-ma98}) tells us that the image of $\iota^*$ is $H^*_T(\GX)$, so we have an isomorphism 
\begin{equation}
\label{eq:equiv_iso}
	\iota^*\colon H^*_T(X(h))\xrightarrow{\cong} H^*_T(\GX).
\end{equation}
Through $\iota^*$, $\pi^*(t_i)$ in \eqref{eq:H*X} corresponds to the constant map $t_i$ on $\Sn$.
Therefore, it follows from \eqref{eq:H*graph} and \eqref{eq:H*X} that the isomorphism \eqref{eq:equiv_iso} reduces to an isomorphism 
\begin{equation}
\label{eq:iso}
	H^*(X(h))\xrightarrow{\cong} H^*(\GX). 
\end{equation}

We consider an action of $\sigma\in \Sn$ on $H^*(BT)$ induced by sending $t_i$ to $t_{\sigma(i)}$ for $i\in [n]$,
and define an action of $\sigma\in \Sn$ on $\mathrm{Map}(\Sn,H^*(BT))$ by 
\begin{equation}
\label{eq:dot_action}
	(\sigma\cdot f)(w):=\sigma(f(\sigma^{-1}w))\qquad
	\text{for $f\in \mathrm{Map}(\Sn,H^*(BT))$ and $w\in \Sn$}.
\end{equation} 
This action was considered by Tymoczko \cite{tymo08}.
It preserves not only $H^*_T(\GX)$ but also the ideal $(t_1,\dots,t_n)$ in $H^*_T(\GX)$,
so the action descends to an action of $\Sn$ on $H^*(\GX)$.
Thus we obtain actions of $\Sn$ on $H^*_T(X(h))$ and $H^*(X(h))$ through the isomorphisms \eqref{eq:equiv_iso} and \eqref{eq:iso}.
These actions are called the \emph{dot action}. 

A similar story holds for the twin $Y(h)$ of $X(h)$ introduced by Ayzenberg-Buchstaber \cite{ay-bu21}.
The twin $Y(h)$ is defined as follows.
We regard $\Fl(n)$ as the homogeneous space $\mathrm{U}(n)/T$ where $\mathrm{U}(n)$ is the unitary group of size $n$ and $T$ is the torus consisting of diagonal matrices in $\mathrm{U}(n)$.
Indeed, a unitary matrix $g=[v_1,\dots,v_n]\in \mathrm{U}(n)$ associates a flag 
\begin{equation}
\label{eq:g-flag}
	\langle v_1\rangle \subset \langle v_1,v_2\rangle \subset \cdots \subset \langle v_1,\dots,v_n\rangle=\C^n
\end{equation}
and this correspondence induces the identification $\mathrm{U}(n)/T=\Fl(n)$.
In particular, the permutation matrix $P(w)$ corresponds to the permutation flag $V_\bullet(w)$, where the $(w(k),k)$-components of $P(w)$ are $1$ for any $k$ and the others components are $0$.
If the flag $V_\bullet$ in \eqref{eq:Xh} is of the form \eqref{eq:g-flag},
then the condition $SV_i\subset V_{h(i)}$ for $i\in [n]$ in \eqref{eq:Xh} can be written as 
\[
	Sg\in gH\qquad \text{i.e.}\qquad g^{-1}Sg\in H,
\]
where $H$ is the vector subspace consisting of $n\times n$ matrices $(a_{ij})$ with $a_{ij}=0$ for $i>h(j)$.
Therefore 
\begin{equation}
\label{eq:Xh_in_Un/T}
	X(h) = \{gT\in \mathrm{U}(n)/T\mid g^{-1}Sg\in H\}.
\end{equation}
The twin $Y(h)$ of $X(h)$ is simply defined as 
\begin{equation}
\label{eq:Yh}
	Y(h) := \{Tg\in \TbU \mid g^{-1}Sg\in H\}. 
\end{equation}
It is a closed smooth manifold but not necessarily an algebraic variety.
The cohomology $H^*(Y(h))$ is isomorphic to $H^*(X(h))$ as groups.
In particular $H^*(Y(h))$ concentrate on even degrees.
However, they are not isomorphic as rings in general.
See \cite[Theorem 3.10 and Remark 3.11]{ay-bu21}.
Note that Ayzenberg-Buchstaber write the twin as $X_h$ and the regular semisimple Hessenberg variety as $Y_h$.

\begin{remark}
	By taking inverse matrices, $Y(h)$ can be defined as
	\[
		Y(h)=\{g/T\in \mathrm{U}(n)/T\mid gSg^{-1}\in H\},
	\]
	and the second and the third authors adopted this definition in \cite{ma-sa23}.
\end{remark}

The right multiplication by $T$ on $\mathrm{U}(n)$ induces the action of $T$ on $\TbU$ which leaves $Y(h)$ invariant.
One can check that 
\[
	Y(h)^T = (\TbU)^T= \{T P(w) \mid w \in \Sn \} =\Sn.
\]
According to \cite[Proposition 5.3]{ay-bu21}, $Y(h)$ with the $T$-action is a GKM manifold and its GKM graph $\GY$ is $(\Sn,E(h),\alpha_Y)$,
where the labeling $\alpha_Y\colon E(h)\to H^2(BT)$ is given by
\begin{equation}\label{eq:alpha_Y}
	\alpha_Y(\{w,w(i,j)\})=t_i-t_j.
\end{equation}
In particular, the underlying graphs of $\GX$ and $\GY$ are the same but the labelings $\alpha_X$ and $\alpha_Y$ are different.
Compare \eqref{eq:alpha_X} and \eqref{eq:alpha_Y}. 
The isomorphisms \eqref{eq:equiv_iso} and \eqref{eq:iso} hold for $Y(h)$ and $\GY$.
However, the dot action \eqref{eq:dot_action} on $\mathrm{Map}(\Sn,H^*(BT))$ does not preserve $H^*_T(\GY)$.
As for $Y(h)$ and $\GY$,
we consider the action of $\sigma\in \Sn$ on $\mathrm{Map}(\Sn,H^*(BT))$ defined by 
\begin{equation}
\label{eq:dagger_action}
	(\sigma\dagger f)(w) := f(\sigma^{-1}w) \qquad
	\text{for $f\in \mathrm{Map}(\Sn,H^*(BT))$ and $w\in \Sn$}. 
\end{equation} 
This action preserves $H^*_T(\GY)$ and induces actions of $\Sn$ on $H^*_T(Y(h))$ and $H^*(Y(h))$, called the dagger action in \cite{ma-sa23}. 

\begin{remark}
\label{rem:Xht}
	Let $J$ be the anti-diagonal matrix with all entry $1$.
	Note that the anti-diagonal transpose of a matrix $A$ is given by $J({}^t\! A) J$,
	where ${}^t\! A$ is the ordinary transpose of $A$.
	In the sense of \eqref{eq:Xh_in_Un/T},
	the correspondence $gT \mapsto gJ T$ gives a diffeomorphism $X(h) \cong X(h^t)$,
	since $g^{-1} = {}^t \bar{g}$, $S = {}^t \!S$, $\bar{S} = S$, and $\bar{H} = H$.
	This correspondence means the orthogonal complement of flags;
	for orthogonal $v_1,\ldots, v_n$,
	\[
		(\{0\} \subset \langle v_1 \rangle \subset \langle v_1, v_2 \rangle \subset \cdots \langle v_1,\ldots, v_{n-1} \rangle \subset \C^n) \mapsto (\C^n \supset \langle v_2,\ldots, v_n \rangle \supset \langle v_3,\ldots, v_n \rangle \supset \cdots \langle v_n \rangle \supset \{0\}).
	\]
	In terms of GKM graphs, an edge $\{w, w(i,j)\}$ corresponds to $\{ww_0, w(i,j)w_0\}$
	and they have the same label.
	Then this gives an isomorphism between GKM graphs of $X(h)$ and $X(h^t)$.
	In particular, this induces an $H^*(BT)$-algebra isomorphism $H^*_T(X(h^t)) \to H^*_T(X(h))$ which commutes with the dot action.
\end{remark}

\section{Setting for the proof of the modular law}
\label{sect:4}

In this section we set up notations for the proof of the modular law for $X(h)$.
The same argument works for $Y(h)$ with a little modification and we point out the modification in Section \ref{sect:6}. 

Since $H^*_T(X(h))$ is isomorphic to $H^*_T(X(h^t))$ as graded $\Sn$-modules (see Remark \ref{rem:Xht}),
it suffices to check the modular relation \eqref{eq:modular_relation} for modular triples $(h_-,h,h_+)$ of type (C) in Definition~\ref{dfn:modular_triple} (see Remark~\ref{rem:2}); so 
\[
h(d)=h(d+1)\quad\text{and}\quad h^{-1}(d)=\{d_0\}\quad \text{for some $1\le d_0<d<n$}
\]
and 
\[
h_-(j)=\begin{cases} d-1\quad &\text{for $j=d_0$}\\
h(j) \quad&\text{otherwise}\end{cases}\qquad\text{and}\qquad 
h_+(j)=\begin{cases} d+1\quad &\text{for $j=d_0$}\\
h(j) \quad&\text{otherwise}.\end{cases}
\]
To simplify the notations, we set
\[
\G_-:=\G_X(h_-),\quad \G:=\G_X(h),\quad \G_+:=\G_X(h_+)
\]
where the vertex sets are all $\Sn$ and the edge sets are respectively
\begin{itemize}
	\item
	$E(\G_-) = \{\{w,w(i,j)\}\mid w\in \Sn,\ j<i\le h_-(j)\} = E(\G)\backslash\{\{w,w(d,d_0)\}\mid w\in\Sn\}$,
	\item
	$E(\G) =\{\{w,w(i,j)\}\mid w\in \Sn,\ j<i\le h(j)\}$,
	\item
	$E(\G_+) = \{\{w,w(i,j)\}\mid w\in \Sn,\ j<i\le h_+(j)\} =E(\G)\cup \{\{w,w(d+1,d_0)\}\mid w\in\Sn\}$,
\end{itemize}
\noindent
and the labelings on them are the same as $\alpha_X$ in \eqref{eq:alpha_X}, see Figure \ref{fig:MT}.

\begin{figure}[H]
\begin{center}
\scalebox{0.9}{
\begin{tikzpicture}
	\draw [thick, cyan] (-7.732,1)--(-6,2);
	\draw [thick, magenta] (-4.278,1)--(-4.278,-1);
	\draw [thick] (-6,-2)-- (-7.732,-1);
	\draw(-6,-2.5) node{$123$};
	\draw(-6,2.5) node{$321$};
	\draw(-8.2,-1.2) node{$132$};
	\draw(-8.2,1.2) node{$312$};
	\draw(-3.8,-1.2) node{$213$};
	\draw(-3.8,1.2) node{$231$};
	\draw [dashed] (-3,2.5)--(-3,-2.5);
	\draw [thick] (0,2)--(1.732,1);
	\draw [thick, magenta] (1.732,1)--(1.732,-1);
	\draw [thick, cyan] (1.732,-1)--(0,-2);
	\draw [thick] (0,-2)--(-1.732,-1);
	\draw [thick, magenta] (-1.732,-1)--(-1.732,1);
	\draw [thick, cyan] (-1.732,1)--(0,2);
	\draw(0,-2.5) node{$123$};
	\draw(0,2.5) node{$321$};
	\draw(-2.2,-1.2) node{$132$};
	\draw(-2.2,1.2) node{$312$};
	\draw(2.2,-1.2) node{$213$};
	\draw(2.2,1.2) node{$231$};
	\draw [dashed] (3,2.5)--(3,-2.5);
	\draw [thick] (6,2)--(7.732,1);
	\draw [thick, magenta] (7.732,1)--(7.732,-1);
	\draw [thick, cyan] (7.732,-1)--(6,-2);
	\draw [thick] (6,-2)--(4.278,-1);
	\draw [thick, magenta] (4.278,-1)--(4.278,1);
	\draw [thick, cyan] (4.278,1)--(6,2);
	\draw [thick, magenta] (6,2)--(6,-2);
	\draw [thick] (4.278,1)--(7.732,-1);
	\draw [thick, cyan] (4.278,-1)--(7.732,1);
	\draw(6,-2.5) node{$123$};
	\draw(6,2.5) node{$321$};
	\draw(3.8,-1.2) node{$132$};
	\draw(3.8,1.2) node{$312$};
	\draw(8.2,-1.2) node{$213$};
	\draw(8.2,1.2) node{$231$};
	\draw (0,-3.3) node{The label on black edges is $t_d - t_{d+1} = t_2-t_3$, that on \textcolor{cyan}{cyan edges} is $t_1-t_2$, and that on \textcolor{magenta}{magenta edges} is $t_1 -t_3$.};
\end{tikzpicture}
}
\caption{GKM graphs $\G_-$, $\G$, and $\G_+$ when $h = (2,3,3)$ and $d = 2$.}
\label{fig:MT} 
\end{center}
\end{figure}

We consider two more labeled graphs $\Gc$ and $\tG$ defined as follows: 
\begin{enumerate}
\item $V(\Gc)=\{\wc\mid w\in \Sn\}$ where $\wc$ is a copy of $w$, 
\item $E(\Gc)= \{\{\wc,\wc(i,j)\}\mid j<i\le h_-(j)\}\cup \{\{\wc,\wc(d+1,d_0)\}\}$, 
where $w\in \Sn$ and
 the label on $\{\wc,\wc(i,j)\}$ is $t_{w(i)}-t_{w(j)}$,
\end{enumerate}
and 
\begin{enumerate}
\item $V(\tG)=V(\G)\cup V(\Gc)$,
\item $E(\tG)=E(\G)\cup E(\Gc)\cup \{\{w,\wc\}\mid w\in \Sn\}$,
where the label on the edge $\{w,\wc\}$ is $t_{w(d+1)}-t_{w(d)}$,
\end{enumerate}
see Figure \ref{fig:tG}.

\begin{figure}[H]
\begin{center}
\begin{tikzpicture}
	\draw [thick, magenta] (0,2.7)--(0,-2.7);
	\draw [thick, cyan] (2.34,1.35)--(-2.34,-1.35);
	\draw [thick] (2.34,-1.35)--(-2.34,1.35);
	\filldraw [white](0,2.4) circle (0.05);
	\filldraw [white](0,-2.4) circle (0.05);
	\filldraw [white](-2.08,1.2) circle (0.05);
	\filldraw [white](-2.08,-1.2) circle (0.05);
	\filldraw [white](2.08,1.2) circle (0.05);
	\filldraw [white](2.08,-1.2) circle (0.05);
	\draw [fill = cyan, opacity=0.15] (-2.6,0.9)--(-2.34,1.35)--(0,2.7)--(0.52,2.7);
	\draw [thick, cyan] (-2.6,0.9)--(-2.34,1.35)--(0,2.7)--(0.52,2.7);
	\draw [thick, cyan] (0.52,2.7)--(-2.6,0.9);
	\draw [thick, magenta] (-2.6,0.9)--(-2.6,-0.9);
	\draw [fill = black, opacity=0.15] (-2.6,-0.9)--(-2.34,-1.35)--(0,-2.7)--(0.52,-2.7);
	\draw [thick] (-2.6,-0.9)--(-2.34,-1.35)--(0,-2.7)--(0.52,-2.7);
	\draw [thick] (0.52,-2.7)--(-2.6,-0.9);
	\draw [thick, cyan] (0.52,-2.7)--(2.08,-1.8);
	\draw [fill = magenta, opacity=0.15] (2.08,1.8)--(2.34,1.35)--(2.34,-1.35)--(2.08,-1.8);
	\draw [thick, magenta] (2.08,1.8)--(2.34,1.35)--(2.34,-1.35)--(2.08,-1.8);
	\draw [thick, magenta] (2.08,-1.8)--(2.08,1.8);
	\draw [thick] (2.08,1.8)--(0.52,2.7);
	\draw(0.8,-3) node{$123$};
	\draw(-0.2,-3) node{${}^\circ123$};
	\draw(0.8,3) node{$321$};
	\draw(-0.2,3) node{${}^\circ321$};
	\draw(2.3,-2.1) node{$213$};
	\draw(2.9,-1.3) node{${}^\circ213$};
	\draw(2.3,2.1) node{$231$};
	\draw(2.9,1.3) node{${}^\circ231$};
	\draw(-3,-0.8) node{$132$};
	\draw(-2.7,-1.6) node{${}^\circ132$};
	\draw(-3,0.8) node{$312$};
	\draw(-2.7,1.6) node{${}^\circ312$};
\end{tikzpicture}
\caption{Labeled graph $\tG$ with emphasized $4$-gons when $h = (2,3,3)$ and $d = 2$.}
\label{fig:tG}
\end{center}
\end{figure}

As a labeled graph,
$\tG$ is considered as blowing up $\G_+$ along the subgraph $\G_-$ (cf.\ \cite[Example 7]{gu-za00}).
Note that $\G, \Gc$, and $\G_-$ are full subgraphs of $\tG$.
We denote
\[
	\tau = (d+1, d) \in \Sn
\]
in the following.

\begin{lemma}
\label{lem:1}
The map $\Phi\colon \G\to \Gc$ sending $w\in V(\G)$ to $\wc\tau\in V(\Gc)$ gives a bijection between their edges preserving the labels, so $\G$ and $\Gc$ are isomorphic as labeled graphs. 
\end{lemma}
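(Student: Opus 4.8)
The plan is to exhibit $\Phi$ explicitly on vertices as stated, check that it carries edges of $\G$ bijectively onto edges of $\Gc$, and verify that the label is preserved under this correspondence. First I would recall the edge sets: $E(\G) = \{\{w, w(i,j)\} \mid w \in \Sn,\ j < i \le h(j)\}$, while $E(\Gc) = \{\{\wc, \wc(i,j)\} \mid j < i \le h_-(j)\} \cup \{\{\wc, \wc(d+1,d_0)\}\}$ (ranging over $w \in \Sn$). Since $\Phi(w) = \wc\tau$ with $\tau = (d+1,d)$, the obvious candidate for the induced edge map is $\{w, w(i,j)\} \mapsto \{\wc\tau, \wc\tau(i,j)\}$, so I must show that $\{\wc\tau, \wc\tau(i,j)\} \in E(\Gc)$ exactly when $\{w, w(i,j)\} \in E(\G)$, and that the two edges carry the same label.

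\textbf{Key steps.} The main computation is to track how the pair of positions $(i,j)$ transforms under right multiplication by $\tau$. Because $\tau = (d, d+1)$ is a transposition, conjugation gives $\tau (i,j) \tau = (\tau(i), \tau(j))$, where $\tau$ as a function swaps $d$ and $d+1$ and fixes everything else. So an edge $\{w, w(i,j)\}$ of $\G$, written in terms of $v := w\tau^{-1} = w\tau$ (note $\tau^2 = e$), becomes $\{v\tau, v\tau(i,j)\} = \{v\tau, v(\tau(i),\tau(j))\tau\}$; here $v\tau$ and $v(\tau(i),\tau(j))\tau$ differ by the transposition $(\tau(i),\tau(j))$ applied on the \emph{right} of $v$. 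Hence, in terms of $v$, the edge of $\Gc$ we land on is $\{\vc, \vc(\tau(i),\tau(j))\}$ (after translating by the $\circ$-decoration and noting the $\tau$ on the right is absorbed by relabeling the $\Sn$-index). I would then do the case analysis on whether $\{i,j\}$ meets $\{d, d+1\}$:

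\begin{itemize}
\item If $\{i,j\} \cap \{d,d+1\} = \emptyset$, then $(\tau(i),\tau(j)) = (i,j)$ and the condition $j < i \le h(j)$ is equivalent to $j < i \le h_-(j)$ (since $h$ and $h_-$ differ only at $j = d_0$, and... one must check $d_0$ is not among these $j$'s, or handle it: actually $h_-(d_0) = d-1 < d = h(d_0)$, so the edges with $j = d_0$ and $i = d$ must be examined separately — this is the edge $\{w, w(d,d_0)\}$, which is in $E(\G)$ but not $E(\G_-)$, and its image should be the extra edge $\{\wc, \wc(d+1,d_0)\}$ of $\Gc$ since $\tau(d) = d+1$, $\tau(d_0) = d_0$).
\item If exactly one of $i,j$ lies in $\{d,d+1\}$, track the index swap; the key point is that $h(d) = h(d+1)$ and $h^{-1}(d) = \{d_0\}$ guarantee the resulting inequality matches the defining inequality of $E(\Gc)$.
\item The case $\{i,j\} = \{d,d+1\}$: this edge is not in $E(\G)$ unless $h(d+1) \ge d+1$... actually $d+1 \le h(d) $ iff $d+1 \le h(d+1)$, which fails precisely when $h(d) = d$; but $h^{-1}(d) = \{d_0\}$ with $d_0 < d$ forces $h(d) > d$ wait — one checks whether $\{d,d+1\}$ is an edge of $\G$ or of $\Gc$ and confirms consistency.
\end{itemize}

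\textbf{Label preservation.} For labels: the label of $\{w, w(i,j)\} \in E(\G)$ is $t_{w(i)} - t_{w(j)}$ by \eqref{eq:alpha_X}. Its image $\{\vc, \vc(\tau(i),\tau(j))\}$ with $v = w\tau$ has label $t_{v(\tau(i))} - t_{v(\tau(j))} = t_{w\tau(\tau(i))} - t_{w\tau(\tau(j))} = t_{w(i)} - t_{w(j)}$, since $\tau^2 = e$. So labels match on the nose; this step is immediate once the vertex-index bookkeeping is set up correctly.

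\textbf{Main obstacle.} The hard part will be the careful case analysis verifying that the edge-defining inequalities transform correctly — in particular confirming that the single ``exceptional'' edge $\{w, w(d,d_0)\}$ of $\G$ (the one absent from $\G_-$) maps exactly to the single exceptional edge $\{\wc, \wc(d+1,d_0)\}$ of $\Gc$, and that no other edges are created or destroyed. This hinges on the precise hypotheses $h(d) = h(d+1)$ and $h^{-1}(d) = \{d_0\}$ of the modular triple of type (C), and on $\tau$ swapping exactly $d$ and $d+1$; I would organize it as a bijection $E(\G) \leftrightarrow E(\Gc)$ index-by-index rather than relying on counting, to make the proof self-contained and transparent.
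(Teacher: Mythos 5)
Your proposal is correct and follows essentially the same route as the paper: the conjugation identity $(i,j)\tau=\tau(\tau(i),\tau(j))$, the resulting edge correspondence $\{w,w(i,j)\}\mapsto\{\vc,\vc(\tau(i),\tau(j))\}$ with $v=w\tau$, the case analysis using $h(d)=h(d+1)$ and $h^{-1}(d)=\{d_0\}$ (including the matching of the exceptional edges $\{w,w(d,d_0)\}$ and $\{\wc,\wc(d+1,d_0)\}$), and the label computation $t_{w\tau(\tau(i))}=t_{w(i)}$. The paper compresses all of this into ``one can easily check,'' so your more explicit case analysis—once the loose end about the edge $\{d,d+1\}$ is tidied up using $h(d)>d$, which you already observed—is just a fleshed-out version of the same argument.
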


\begin{proof}
Noting $(i,j)\tau=\tau(\tau(i),\tau(j))$, 
one can easily check that $\{w,w(i,j)\}$ is an edge of $\G$ if and only if $\{\wc\tau,\wc(i,j)\tau\}$ is an edge of $\Gc$ and that the label on the edge $\{\wc\tau,\wc(i,j)\tau\}$ of $\Gc$ is $t_{w(i)}-t_{w(j)}$ which agrees with the label on $\{w,w(i,j)\}$ of $\G$. This proves the lemma. 
\end{proof}

Unfortunately, $\tG$ is not a GKM graph of any GKM manifold 
since $\tG$ is not $2$-independent.
 Indeed, the four edges 
\[
	\{w,w\tau\}, \ \{\wc, \wc\tau\},\ \{w,\wc\},\ \{w\tau,\wc\tau\},
\] 
which form a $4$-gon, have the same label $t_{w(d+1)}-t_{w(d)}$ for each $w\in \Sn$, see Figure \ref{fig:tG}.
We shall assign $\tG$ an additional data. 
It is a map $s \colon V(\tG) \to \{\pm1\}$ defined by
\[
	s(w) = +1,\quad s({}^\circ w) = -1 \quad \text{for any } w \in \Sn.
\]
The geometrical meaning of this sign is explained in Appendix \ref{app:tX}.
We define the equivariant cohomology ring of the pair
$(\tG, s)$ as follows.
Let $K_w$ be the $4$-gon with vertex set $\{w, {}^\circ w, w\tau, {}^\circ w \tau \}$.
Then
\[
	H^*_T(\tG, s) = \Set{f \in H^*_T(\tG) | \sum_{v \in V(K_w)} s(v)f(v) \equiv 0 \bmod{(t_{w(d)} - t_{w(d+1)})^2} \text{ for any } w \in \Sn}.
\]
In other words, $H^*_T(\tG, s)$ consists of all $f \in H^*_T(\tG)$ satisfying
\begin{equation}
\label{eq:square}
	f(w)- f(\wc)+f(w\tau)-f(\wc\tau) \equiv 0 \mod{(t_{w(d+1)}-t_{w(d)})^2},
\end{equation} 
for any $w \in \Sn$.
The dot actions on $H^*_T(\Gc)$ and $H^*_T(\tG,s)$ are defined in the same way as that on $H^*_T(\G)$,
that is, $(\sigma \cdot f)({}^\circ w) = \sigma(f({}^\circ \sigma^{-1} w))$ for $\sigma \in \Sn$.

Let $x_i$ be an element of $\mathrm{Map}(\Sn \cup \Snc, \C[t_1,\ldots, t_n])$ defined by
\[
	x_i(\wc \tau) = x_i(w) = t_{w(i)}
\]
for $w \in \Sn$.
Then $x_i \in H^*_T(\tG,s)$.
We write $x_i$ also for the restriction of $x_i$ onto $\G$ or $\Gc$.
On $\G$, $x_i$ is the equivariant Euler class of the tautological line bundle $\Set{(V_\bullet, v) | v \in V_i/V_{i-1}}$ over $X(h)$.

We consider the following four maps. We show in Lemma~\ref{lem:2} below that the image of each map is contained in $H^*_T(\tG,s)$. 

\medskip
(1) $\varphi\colon H^*_T(\Gc)\to H^*_T(\tG,s)$ is an $H^*(BT)$-algebra map defined by 
\[
	\varphi(f)(w) := f(\wc\tau),\qquad
	\varphi(f)(\wc) := f(\wc).
\] 

\medskip
(2) $\psi_!\colon H^{*-2}_T(\G)\to H^*_T(\tG,s)$ is an $H^*(BT)$-module map defined by 
\[
	\psi_!(f)(w) := ((x_{d+1}-x_{d})f)(w),\qquad
	\psi_!(f)(\wc) := 0.
\]

\medskip
(3) $\eta\colon H^*_T(\G_+)\to H^*_T(\tG,s)$ is an $H^*(BT)$-algebra map defined by 
\[
	\eta(f)(w) = \eta(f)(\wc) := f(w).
\]

\medskip
(4) $\rho_!\colon H^{*-2}_T(\G_-)\to H^*_T(\tG,s)$ is an $H^*(BT)$-module map defined by 
\[
	\rho_!(f)(w) := ((x_{d}-x_{d_0})f)(w),\qquad
	\rho_!(f)(\wc) := ((x_{d+1}-x_{d_0})f)(w).
\]
One can easily check that all the maps above commute with the dot action.

\begin{lemma}
\label{lem:2}
The images of all the four maps above are contained in $H^*_T(\tG,s)$. 
\end{lemma}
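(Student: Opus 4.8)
The task is to verify, for each of the four maps $\varphi$, $\psi_!$, $\eta$, $\rho_!$, that its image lands in $H^*_T(\tG,s)$. This breaks into two checks per map: first that the proposed values give a well-defined element of $H^*_T(\tG)$ — i.e.\ the congruence $f'(e_+)\equiv f'(e_-)\bmod{\alpha(e)}$ holds across every edge $e$ of $\tG$ (edges inside $\G$, edges inside $\Gc$, and the ``vertical'' edges $\{w,\wc\}$ with label $t_{w(d+1)}-t_{w(d)}$) — and second that the $4$-gon relation \eqref{eq:square} holds for every $w\in\Sn$.

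\smallskip

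\textbf{First I would dispose of $\eta$,} which is the easiest: $\eta(f)$ is constant on each pair $\{w,\wc\}$, so the vertical-edge congruence is trivial; the congruences on edges of $\G$ hold because $f\in H^*_T(\G_+)$ and $E(\G)\subset E(\G_+)$ with the same labels; the congruences on the extra edges $\{\wc,\wc(i,j)\}$ of $\Gc$ follow from the corresponding edges of $\G_+$ (note $E(\Gc)$ consists of edges $\{\wc,\wc(i,j)\}$ with $j<i\le h_-(j)$ together with $\{\wc,\wc(d+1,d_0)\}$, and all of these are $\le h_+$, so they come from edges of $\G_+$). For the $4$-gon relation: the alternating sum $\eta(f)(w)-\eta(f)(\wc)+\eta(f)(w\tau)-\eta(f)(\wc\tau)=f(w)-f(w)+f(w\tau)-f(w\tau)=0$, which is $\equiv 0$ modulo anything. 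For $\varphi$, the same bookkeeping applies after translating by $\tau$: by Lemma~\ref{lem:1}, edges of $\G$ correspond under $\Phi$ to edges of $\Gc$ with matching labels, so $\varphi(f)|_{\G}=f\circ\Phi$ satisfies the $\G$-congruences; the $\Gc$-congruences are immediate since $\varphi(f)|_{\Gc}=f$; the vertical edge $\{w,\wc\}$ carries label $t_{w(d+1)}-t_{w(d)}$ and $\varphi(f)(w)-\varphi(f)(\wc)=f(\wc\tau)-f(\wc)$, which is divisible by $\alpha(\{\wc,\wc\tau\})=t_{w(d+1)}-t_{w(d)}$ because $\{\wc,\wc\tau\}$ is an edge of $\Gc$ (it equals $\{\wc,\wc(d+1,d)\}$, and $d=h_-(d_0)+1\le h_-(d_0+1)$… — here one must check $\{d,d+1\}$ is indeed an edge of $\G_-$; since $h(d)=h(d+1)\ge d+1$ we have $d\le h(d+1)$ wait, we need $d\le h_-(d+1)=h(d+1)$, true, so $\{\wc,\wc(d+1,d)\}\in E(\Gc)$). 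The $4$-gon relation for $\varphi$ reads $f(\wc\tau)-f(\wc)+f(\wc\tau\cdot\tau)-f(\wc\tau)$; since $\tau^2=e$ this is $f(\wc\tau)-f(\wc)+f(\wc)-f(\wc\tau)=0$. Hmm — I must be careful with which vertex is which: for $K_w$ with vertices $w,\wc,w\tau,\wc\tau$, we have $\varphi(f)(w)=f(\wc\tau)$, $\varphi(f)(\wc)=f(\wc)$, $\varphi(f)(w\tau)=f(\wc\tau\cdot\tau)=f(\wc)$, $\varphi(f)(\wc\tau)=f(\wc\tau)$, so the alternating sum is $f(\wc\tau)-f(\wc)+f(\wc)-f(\wc\tau)=0$. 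Good.

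\smallskip

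\textbf{The two pushforward maps $\psi_!$ and $\rho_!$ are the main obstacle,} since there the $4$-gon relation \eqref{eq:square} genuinely uses the factor $(t_{w(d+1)}-t_{w(d)})$, and the edge-congruences are no longer automatic. For $\psi_!$: on $\Gc$ it is identically $0$, so all $\Gc$-edge congruences are trivial; on the vertical edge $\{w,\wc\}$ we need $\psi_!(f)(w)-0=(x_{d+1}-x_d)(w)\,f(w)=(t_{w(d+1)}-t_{w(d)})f(w)$ to be divisible by $t_{w(d+1)}-t_{w(d)}$ — which it visibly is. On an edge $\{w,w(i,j)\}$ of $\G$ with label $t_{w(i)}-t_{w(j)}$: we must show $(t_{w(d+1)}-t_{w(d)})f(w)-(t_{w(i,j)(d+1)}-t_{w(i,j)(d)})f(w(i,j))\equiv 0\bmod(t_{w(i)}-t_{w(j)})$. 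Modulo $t_{w(i)}-t_{w(j)}$ we may identify $t_{w(i)}$ with $t_{w(j)}$; under this identification $x_{d+1}-x_d$ takes the same value at $w$ and at $w(i,j)$ unless $\{i,j\}\cap\{d,d+1\}\ne\emptyset$, and $f(w)\equiv f(w(i,j))$ since $f\in H^*_T(\G)$ — so the difference vanishes in the generic case, and one checks the finitely many special cases $\{i,j\}\cap\{d,d+1\}\ne\emptyset$ directly: if, say, $i=d+1$, then $t_{w(i)}-t_{w(j)}=t_{w(d+1)}-t_{w(j)}$, and after setting $t_{w(d+1)}=t_{w(j)}$ the two Euler-class factors become equal, so again the difference is a multiple of $t_{w(i)}-t_{w(j)}$ times $(f(w)-f(w(i,j)))\equiv 0$. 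For the $4$-gon relation, $\psi_!(f)(\wc)=\psi_!(f)(\wc\tau)=0$, so the alternating sum is $(t_{w(d+1)}-t_{w(d)})f(w)+(t_{w\tau(d+1)}-t_{w\tau(d)})f(w\tau)=(t_{w(d+1)}-t_{w(d)})f(w)+(t_{w(d)}-t_{w(d+1)})f(w\tau)=(t_{w(d+1)}-t_{w(d)})(f(w)-f(w\tau))$, and this is divisible by $(t_{w(d+1)}-t_{w(d)})^2$ precisely because $\{w,w\tau\}=\{w,w(d+1,d)\}$ is an edge of $\G$ (true, as shown above) with label $t_{w(d+1)}-t_{w(d)}$, so $f(w)-f(w\tau)\equiv 0\bmod(t_{w(d+1)}-t_{w(d)})$. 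Finally $\rho_!$: here the vertical-edge congruence requires $\rho_!(f)(w)-\rho_!(f)(\wc)=((x_d-x_{d_0})-(x_{d+1}-x_{d_0}))(w)\,f(w)=(t_{w(d)}-t_{w(d+1)})f(w)$ to be divisible by $t_{w(d+1)}-t_{w(d)}$ — which it is; the congruences on $E(\G)$ and on $E(\Gc)$ are of the same flavor as for $\psi_!$ (identify $t_{w(i)}$ with $t_{w(j)}$, use $f\in H^*_T(\G_-)$, and treat the special indices $j\in\{d,d+1,d_0\}$ by hand, using $h^{-1}(d)=\{d_0\}$ to control which edges actually occur); and the $4$-gon relation becomes $(x_d-x_{d_0})(w)f(w)-(x_{d+1}-x_{d_0})(w)f(w)+(x_d-x_{d_0})(w\tau)f(w\tau)-(x_{d+1}-x_{d_0})(w\tau)f(w\tau)$, which upon using $\tau=(d,d+1)$ collapses to $(t_{w(d)}-t_{w(d+1)})\bigl(f(w)-f(w\tau)\bigr)$, again divisible by $(t_{w(d+1)}-t_{w(d)})^2$ since $\{w,w\tau\}\in E(\G_-)$. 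The only genuinely delicate point — the ``hard part'' — is the case analysis on the edge-congruences for $\psi_!$ and $\rho_!$ when the transposed indices $\{i,j\}$ meet $\{d,d+1\}$ (for $\psi_!$) or $\{d,d+1,d_0\}$ (for $\rho_!$): one must verify in each case that the relevant edge $\{w,w(i,j)\}$ lies in the domain graph ($\G$, resp.\ $\G_-$), using $h(d)=h(d+1)$ and $h^{-1}(d)=\{d_0\}$, and that the Euler-class factors coincide modulo the edge label. I would organize this as a short sublemma listing the at most three or four offending pairs and checking each.
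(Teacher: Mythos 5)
Your proposal follows the same route as the paper's proof: check the congruence of \eqref{eq:graph_cohomology} edge-class by edge-class ($E(\G)$, $E(\Gc)$, the vertical edges $\{w,\wc\}$) and then the $4$-gon relation \eqref{eq:square}, for each of the four maps; your computations for $\varphi$, $\eta$, $\psi_!$ and all four $4$-gon relations are correct and match the paper's. The one soft spot is the part you explicitly defer for $\rho_!$: your announced strategy for the edge congruences is to ``verify that the relevant edge lies in the domain graph'' and then use $f\in H^*_T(\G_-)$ together with matching Euler-class factors, but this reduction fails exactly for the two edges $\{w,w(d,d_0)\}\in E(\G)\setminus E(\G_-)$ and $\{\wc,\wc(d+1,d_0)\}\in E(\Gc)$, which are \emph{not} edges of the domain graph $\G_-$ (indeed $h_-(d_0)=d-1$). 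For those the argument is different in kind: the prefactor $(x_d-x_{d_0})(w)=t_{w(d)}-t_{w(d_0)}$ (resp.\ $(x_{d+1}-x_{d_0})(w)=t_{w(d+1)}-t_{w(d_0)}$) is itself $\pm$ the edge label at both endpoints, so $\rho_!(f)$ is divisible by the label at each vertex separately, with no input from $f$; this is what the paper means by ``obvious by the definition of $\rho_!$''. With that one observation added (and it is the only non-automatic case --- for the remaining edges your ``identify $t_{w(i)}$ with $t_{w(j)}$'' argument is just the statement that $x_d,x_{d_0},x_{d+1}$ lie in the graph cohomology, so the product with $f$ does too), your proof is complete. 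A minor slip: to see $\{w,w\tau\}\in E(\G_-)$ the condition is $d+1\le h_-(d)=h(d)=h(d+1)$, not $d\le h_-(d+1)$ as you wrote, though both happen to hold.
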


\begin{proof} $E(\tG)$ consists of three classes $E(\G)$, $E(\Gc)$, and $\{\{w,\wc\}\mid w\in \Sn\}$.
We check the congruence relation in \eqref{eq:graph_cohomology} for each class, and the congruence relation \eqref{eq:square}. 

\begin{enumerate}
\item
The congruence relation in \eqref{eq:graph_cohomology} for $E(\G)$ follows from Lemma~\ref{lem:1} and is obvious for $E(\Gc)$ since $f\in H^*_T(\Gc)$.
As for $\{w,\wc\}$, we have 
\[
	\varphi(f)(w)-\varphi(f)(\wc) = f(\wc\tau)-f(\wc) \equiv 0 \mod{(t_{w(d+1)}-t_{w(d)})}
\]
because $f\in H^*_T(\Gc)$ and $\tau=(d+1,d)$. 
The congruence relation \eqref{eq:square} follows from 
\[
	\varphi(f)(w)-\varphi(f)(\wc)+\varphi(f)(w\tau)-\varphi(f)(\wc\tau)
	= f(\wc\tau)-f(\wc)+f(\wc)-f(\wc\tau) = 0.
\]

\item
The congruence relation in \eqref{eq:graph_cohomology} for $E(\G)$ and $\{w,\wc\}$ is obvious and that for $E(\Gc)$ is trivial.
The congruence relation \eqref{eq:square} follows from 
\[
\begin{split}
	\psi_!(f)(w)-\psi_!(f)(\wc)+\psi_!(f)(w\tau)-\psi_!(f)(\wc\tau)
	=(t_{w(d+1)}-t_{w(d)})f(w)+(t_{w\tau(d+1)}-t_{w\tau(d)})f(w\tau)\\
	\equiv (t_{w(d+1)}-t_{w(d)})(f(w)-f(w\tau))
	\equiv 0 \mod{(t_{w(d+1)}-t_{w(d)})^2},
\end{split}
\]
where the last congruence relation holds because $f\in H^*_T(\G)$ and $\tau=(d+1,d)$. 

\item
The congruence relations in \eqref{eq:graph_cohomology} and \eqref{eq:square} are obvious from the definition of $\eta(f)$. 

\item
The congruence relations in \eqref{eq:graph_cohomology} for $E(\G) \setminus \{w,w(d_0,d)\}$ and $E(\Gc) \setminus \{\wc, \wc (d_0,d+1)\}$ are easily checked as in the case of $\psi_!$.
Those for $\{w,w(d_0,d)\}$ and $\{\wc, \wc (d_0,d+1)\}$ are obvious by the definition of $\rho_!$.
As for $\{w,\wc\}$, we have
\[
\begin{split}
	\rho_!(f)(w)-\rho_!(f)(\wc)
	&=(t_{w(d)}-t_{w(d_0)})f(w)-(t_{w(d+1)}-t_{w(d_0)})f(w)\\
	&=(t_{w(d)}-t_{w(d+1)})f(w)
	\equiv 0 \mod{(t_{w(d+1)}-t_{w(d)})}.
\end{split}
\]

The congruence relation \eqref{eq:square} follows from 
\[
\begin{split}
	\rho_!(f)(w)-\rho_!(f)(\wc)+\rho_!(f)(w\tau)-\rho_!(f)(\wc\tau)
	= (t_{w(d)}-t_{w(d+1)})f(w) + (t_{w\tau(d)}-t_{w\tau(d+1)})f(w\tau)\\
	= (t_{w(d)}-t_{w(d+1)})(f(w)-f(w\tau))
	\equiv 0 \mod{(t_{w(d+1)}-t_{w(d)})^2}.
\end{split}
\]
The last congruence relation holds because $f\in H^*_T(\G_-)$ and $\tau=(d+1,d)$. 
\end{enumerate}
\end{proof}

\section{Proof of the modular law for geometric sides}
\label{sect:5}

Under the set up in Section~\ref{sect:4}, we prove the following. 

\begin{theorem}
\label{thm:main}
The homomorphisms 
\[
	\varphi+\psi_!\colon H^*_T(\Gc)\oplus H^{*-2}_T(\G)\to H^*_T(\tG,s),\qquad 
	(\varphi+\psi_!)(f,g) = \varphi(f)+\psi_!(g)
\]
and 
\[
	\eta+\rho_!\colon H^*_T(\G_+)\oplus H^{*-2}_T(\G_-)\to H^*_T(\tG,s),\qquad 
	(\eta+\rho_!)(f_+,f_-) = \eta(f_+)+\rho_!(f_-)
\]
are both isomorphisms as $\Sn$-modules.
\end{theorem}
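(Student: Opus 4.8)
My approach is to prove both maps are isomorphisms of $H^*(BT)$-modules (the $\Sn$-equivariance is automatic, since all four maps commute with the dot action as noted); once they are module isomorphisms, they are automatically $\Sn$-module isomorphisms. Since $H^*_T(\G)$, $H^*_T(\Gc)$, $H^*_T(\G_\pm)$, and $H^*_T(\tG,s)$ are all free $H^*(BT)$-modules concentrated in even degrees (the first four by GKM theory, the last because we will compute it), and the $\Sn$-action only permutes the variables $t_i$, it suffices to establish the isomorphisms after passing to the quotient by $(t_1,\dots,t_n)$ — i.e.\ for ordinary graph cohomology — \emph{or}, better, to argue directly on the equivariant level via a Poincar\'e series / localization count together with injectivity. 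I would structure the proof in three steps.

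\emph{Step 1: Injectivity.} For $\varphi+\psi_!$: suppose $\varphi(f)+\psi_!(g)=0$. Evaluating on $\wc$ gives $f(\wc)=0$ for all $w$, hence $f=0$; then $\psi_!(g)=0$, and evaluating on $w$ gives $(x_{d+1}-x_d)(w)g(w)=0$ in the integral domain $H^*(BT)$, so $g(w)=0$ for all $w$ (as $w(d)\ne w(d+1)$), whence $g=0$. For $\eta+\rho_!$: suppose $\eta(f_+)+\rho_!(f_-)=0$. Evaluating on $w$ and on $\wc$ gives
\[
f_+(w)+(t_{w(d)}-t_{w(d_0)})f_-(w)=0,\qquad f_+(w)+(t_{w(d+1)}-t_{w(d_0)})f_-(w)=0.
\]
Subtracting, $(t_{w(d)}-t_{w(d+1)})f_-(w)=0$, so $f_-=0$, and then $f_+=0$. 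So both maps are injective.

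\emph{Step 2: The images together exhaust $H^*_T(\tG,s)$.} This is the heart of the matter, and the main obstacle. Given $F\in H^*_T(\tG,s)$, I want to write $F=\varphi(f)+\psi_!(g)$. Necessarily $f(\wc):=F(\wc)$ (well-defined in $H^*_T(\Gc)$ since $\G_-=\Gc\cap\tG$-restriction already satisfies the $\Gc$-congruences; here I use that the new edge $\{\wc,\wc(d+1,d_0)\}$-congruence follows from the $4$-gon relation \eqref{eq:square} combined with the $\G_+$-edge $\{w,w(d+1,d_0)\}$-congruence — this is exactly the step where the definition of $\Gc$ with the extra edge, and the sign data $s$, are used). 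Then $F-\varphi(f)$ vanishes on all $\wc$, and on the edge $\{w,\wc\}$ the congruence mod $t_{w(d+1)}-t_{w(d)}$ forces $(F-\varphi(f))(w)$ to be divisible by $t_{w(d+1)}-t_{w(d)}=(x_{d+1}-x_d)(w)$, so we may define $g(w):=(F-\varphi(f))(w)/(x_{d+1}-x_d)(w)$; the $4$-gon relation \eqref{eq:square} for $F-\varphi(f)$ then says precisely that $g$ satisfies the $\G$-congruence across $\{w,w\tau\}$, while for the other edges of $\G$ one divides the $\tG$-congruence for $F-\varphi(f)$ by the (coprime) factor $t_{w(d+1)}-t_{w(d)}$. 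This gives surjectivity of $\varphi+\psi_!$. For $\eta+\rho_!$ the argument is parallel: given $F\in H^*_T(\tG,s)$, the differences $F(w)-F(\wc)$ are divisible by $t_{w(d)}-t_{w(d+1)}$; one checks that $f_-(w):=(F(w)-F(\wc))/(t_{w(d)}-t_{w(d+1)})$ defines an element of $H^*_T(\G_-)$ (using \eqref{eq:square} for the $\{w,w\tau\}$-congruence, and dividing out the coprime factor for the rest, noting the edge $\{w,w(d,d_0)\}$ is absent in $\G_-$ precisely where it would cause trouble), and then $f_+(w):=F(w)-(t_{w(d)}-t_{w(d_0)})f_-(w)=F(\wc)-(t_{w(d+1)}-t_{w(d_0)})f_-(w)$ (the two expressions agree by definition of $f_-$), and one verifies $f_+\in H^*_T(\G_+)$: the old $\G$-congruences are inherited, and the new $\{w,w(d+1,d_0)\}$-congruence is the one extra thing to check, using the $\Gc$-structure of $F$ restricted to the copied vertices.

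\emph{Step 3: Conclude.} Injectivity plus surjectivity in each degree gives both maps bijective; since all the maps are $H^*(BT)$-linear and $\Sn$-equivariant, they are isomorphisms of graded $\Sn$-modules, as claimed. I expect Step 2 to be the only substantive difficulty — specifically, the bookkeeping that the candidate preimages land in $H^*_T(\Gc)$ (resp.\ $H^*_T(\G_-)$) and that the "new" edge of $\Gc$ (resp.\ the "missing" edge $\{w,w(d,d_0)\}$ of $\G_-$ versus the "new" edge $\{w,w(d+1,d_0)\}$ of $\G_+$) interacts correctly with the $4$-gon relation \eqref{eq:square}; everything else is a routine division-of-coprime-factors argument in the polynomial ring $H^*(BT)$.
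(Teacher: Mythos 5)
Your proposal is correct and follows essentially the same route as the paper's proof: injectivity is immediate by evaluating at the two families of vertices, and surjectivity is obtained by explicitly dividing off the label $t_{w(d+1)}-t_{w(d)}$ of the edges $\{w,\wc\}$, then verifying that the quotient function satisfies the required congruences --- the $4$-gon relation \eqref{eq:square} handling the edge $\{w,w\tau\}$ and coprimality of the labels handling all the others --- exactly as in Section~\ref{sect:5}. One minor correction: the fact that $f(\wc):=\tilde f(\wc)$ defines an element of $H^*_T(\Gc)$ requires no appeal to \eqref{eq:square} nor to the $\G_+$-edge $\{w,w(d+1,d_0)\}$ (which is not an edge of $\tG$); it is immediate because $\Gc$, including its edges $\{\wc,\wc(d+1,d_0)\}$, is a full labeled subgraph of $\tG$.
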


Since $\Gc$ is isomorphic to $\G$ as labeled graphs by Lemma~\ref{lem:1}, we obtain the following corollary.

\begin{corollary}
There is an isomorphism 
\[
	H^*(\G)\oplus H^{*-2}(\G)\cong H^*(\G_+)\oplus H^{*-2}(\G_-)
\]
as $\Sn$-modules. 
Therefore $H^*(X(h))$ satisfies the modular law.
\end{corollary}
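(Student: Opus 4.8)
The plan is to prove Theorem~\ref{thm:main} by checking that each of the two maps is injective and surjective, and then read off the Corollary purely formally.

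\medskip

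\noindent\textbf{Step 1: the two maps land in $H^*_T(\tG,s)$ and commute with the dot action.} This is already done in Lemma~\ref{lem:2} and the remark preceding it, so I may simply invoke it. Thus $\varphi+\psi_!$ and $\eta+\rho_!$ are genuine $\Sn$-equivariant $H^*(BT)$-module homomorphisms into $H^*_T(\tG,s)$; it remains to see they are bijective.

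\medskip

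\noindent\textbf{Step 2: injectivity.} For $\varphi+\psi_!$, suppose $\varphi(f)+\psi_!(g)=0$. Evaluating at the vertices $\wc$ kills the $\psi_!$-term (since $\psi_!(g)(\wc)=0$), leaving $\varphi(f)(\wc)=f(\wc)=0$ for all $w$, hence $f=0$; then $\psi_!(g)=0$, and since $x_{d+1}-x_d$ evaluated at $w$ is $t_{w(d+1)}-t_{w(d)}\neq 0$, multiplication by it is injective on $\mathrm{Map}(\Sn,H^*(BT))$, so $g=0$. For $\eta+\rho_!$, suppose $\eta(f_+)+\rho_!(f_-)=0$. Evaluating at $w$ and at $\wc$ gives the pair of equations $f_+(w)+(x_d-x_{d_0})(w)f_-(w)=0$ and $f_+(w)+(x_{d+1}-x_{d_0})(w)f_-(w)=0$; subtracting, $\big((x_{d+1}-x_{d_0})(w)-(x_d-x_{d_0})(w)\big)f_-(w)=(t_{w(d+1)}-t_{w(d)})f_-(w)=0$, so $f_-=0$ and then $f_+=0$. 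So both maps are injective.

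\medskip

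\noindent\textbf{Step 3: surjectivity.} This is the main obstacle, and I expect it to be handled by an explicit splitting. Given $F\in H^*_T(\tG,s)$, for $\varphi+\psi_!$ I would set $f:=F|_{\Gc}\in H^*_T(\Gc)$ (legitimate because $\Gc$ is a full subgraph), and then define $g$ on $\G$ by $g(w):=\big(F(w)-F(\wc\tau)\big)/\big(t_{w(d+1)}-t_{w(d)}\big)$; the condition~\eqref{eq:square} for $\tG$ is exactly what forces this quotient to be a polynomial and to satisfy the edge congruences defining $H^*_T(\G)$ (here one uses Lemma~\ref{lem:1} to match edges of $\G$ with edges of $\Gc$, together with the $4$-gon relation), and by construction $\varphi(f)+\psi_!(g)=F$. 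For $\eta+\rho_!$, given $F$ I would solve the same two evaluation equations as in Step 2 but now with right-hand side $F(w)$ and $F(\wc)$: set $f_-(w):=\big(F(w)-F(\wc)\big)/\big((x_d-x_{d_0})(w)-(x_{d+1}-x_{d_0})(w)\big)=\big(F(w)-F(\wc)\big)/\big(t_{w(d)}-t_{w(d+1)}\big)$ and $f_+(w):=F(w)-(x_d-x_{d_0})(w)f_-(w)$; then one must verify $f_+\in H^*_T(\G_+)$ and $f_-\in H^*_T(\G_-)$ by checking the relevant edge congruences — the delicate ones are the edges $\{w,w(d+1,d_0)\}$ of $\G_+$ and the absence of constraint across $\{w,w(d,d_0)\}$ for $\G_-$ — all of which follow from the congruences that $F$ satisfies on $\tG$ together with~\eqref{eq:square}. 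These splittings are manifestly $\Sn$-equivariant, so both maps are isomorphisms of $\Sn$-modules.

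\medskip

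\noindent\textbf{Step 4: deduce the Corollary.} Combining Theorem~\ref{thm:main} with Lemma~\ref{lem:1}, which gives $H^*(\Gc)\cong H^*(\G)$ as $\Sn$-modules, and quotienting by the ideal $(t_1,\dots,t_n)$ (the maps are $H^*(BT)$-module maps, so they descend), yields $H^*(\G)\oplus H^{*-2}(\G)\cong H^*(\G_+)\oplus H^{*-2}(\G_-)$ as $\Sn$-modules. Passing to Frobenius characteristics and Poincaré series in $q$ turns this into $(1+q)\sum_i \mathrm{ch}(H^{2i}(\G))q^i=\sum_i\mathrm{ch}(H^{2i}(\G_+))q^i+q\sum_i\mathrm{ch}(H^{2i}(\G_-))q^i$, i.e.\ the modular relation~\eqref{eq:modular_relation} for $F(h):=\sum_i\mathrm{ch}(H^{2i}(X(h)))q^i$ on modular triples of type (C); by Remark~\ref{rem:Xht} and Remark~\ref{rem:2} this suffices to conclude that $H^*(X(h))$ satisfies the modular law. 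The only real work is Step 3, and specifically checking that the candidate preimages satisfy the graph-cohomology congruences at the few edges where $\G$, $\G_\pm$, and $\tG$ genuinely differ.
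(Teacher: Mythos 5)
Your proposal is correct and follows essentially the same route as the paper: injectivity by evaluating at the two families of vertices, surjectivity by the explicit splittings $g(w)=(\tilde f(w)-\tilde f(\wc\tau))/(t_{w(d+1)}-t_{w(d)})$ and $f_-(w)=(\tilde f(w)-\tilde f(\wc))/(t_{w(d)}-t_{w(d+1)})$ (the paper's $p=-f_-$), followed by quotienting by $(t_1,\dots,t_n)$ and invoking Lemma~\ref{lem:1} and Remarks~\ref{rem:Xht} and~\ref{rem:2}. The only minor imprecision is that polynomiality of $g$ (and of $f_-$) follows from the ordinary edge congruences of $\tG$ along $\{w,\wc\}$ and $\{\wc,\wc\tau\}$, whereas the $4$-gon condition \eqref{eq:square} is what is actually needed to establish the congruence for $g$ and $f_-$ along the edges $\{w,w\tau\}$.
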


The rest of this section is devoted to the proof of Theorem~\ref{thm:main}. 

\begin{proof}[Proof of the former part in Theorem~\ref{thm:main}] 
Clearly $\varphi(H^*_T(\Gc))\cap \psi_!(H^{*-2}_T(\G))=\{0\}$ and $\varphi, \psi_!$ are injective,
so it suffices to show the surjectivity of $\varphi+\psi_!$. 

Take any element $\tilde{f}\in H^*_T(\tG,s)$ and denote its restriction to $\Gc$ by $f$.
Then $f\in H^*_T(\Gc)$ and $\tilde{f}-\varphi(f)$ vanishes on $V(\Gc)$ by definition of $\varphi$.
Therefore, there is $g\in \mathrm{Map}(V(\G),H^*(BT))$ such that 
\begin{equation}
\label{eq:g}
(\tilde{f}-\varphi(f))(w)=(t_{w(d+1)}-t_{w(d)})g(w) \quad \text{for $w\in V(\G)=\Sn$}.
\end{equation}
We show that $g\in H^{*-2}_T(\G)$.
Then \eqref{eq:g} means that $\tilde{f}-\varphi(f)=\psi_!(g)$, proving the surjectivity of $\varphi+\psi_!$.

We shall check that $g$ satisfies the congruence relation in \eqref{eq:graph_cohomology} for $\G$. 
It follows from \eqref{eq:g} and the definitions of $\varphi$ and $f$ that 
\begin{equation}
\label{eq:gw}
	(t_{w(d+1)}-t_{w(d)})g(w) = (\tilde{f}-\varphi(f))(w)
	= \tilde{f}(w)-f(\wc\tau) = \tilde{f}(w)-\tilde{f}(\wc\tau).
\end{equation}
Let $j<i\leq h(j)$ and set $v = w(i,j)$.  We note that since $x_d$ and $x_{d+1} \in H^*_T(\G)$ and $v=w(i,j)$, we have
\begin{equation}
\label{eq:vdwd}
	t_{v(d+1)} - t_{v(d)} = x_{d+1}(v) - x_{d}(v)
	\equiv x_{d+1}(w) - x_{d}(w) = t_{w(d+1)}-t_{w(d)}
	\mod (t_{w(i)} - t_{w(j)}). 
\end{equation}
We also note that 
\begin{equation} \label{eq:tildefwv1}
	\tilde{f}(w)-\tilde{f}(v) \equiv 0 \equiv \tilde{f}(\wc\tau) -\tilde{f}(\vc\tau)
	\mod  (t_{w(i)} - t_{w(j)})
\end{equation}
since $\tilde{f} \in H^*_T(\tG,s)$ and the labels on $\{w,v\}$ and $\{\wc \tau, \vc \tau\}$ are the same, namely $t_{w(i)} - t_{w(j)}$, by Lemma \ref{lem:1}.
Then, it follows from \eqref{eq:vdwd}, \eqref{eq:gw}, and \eqref{eq:tildefwv1} that 
\[
\begin{split}
	(t_{w(d+1)} - t_{w(d)})(g(w) -g(v)) &\equiv (t_{w(d+1)} - t_{w(d)})g(w) -(t_{v(d+1)}-t_{v(d)})g(v)\\
	&= \tilde{f}(w)-\tilde{f}(\wc\tau) - (\tilde{f}(v)-\tilde{f}(\vc\tau))\\
	&\equiv 0 \mod (t_{w(i)} - t_{w(j)}).
\end{split}
\]
Since $H^*(BT)$ is a polynomial ring, the congruence relation above implies 
\begin{equation} 
\label{eq:g_d+1_d_not}
	g(w) \equiv g(v) \mod  (t_{w(i)} - t_{w(j)})\quad \text{when $(i,j)\not=(d+1,d)$}. 
\end{equation}

When $(i,j)=(d+1,d)$, we have $v = w \tau$. Then, since $(\tau(d+1),\tau(d))=(d,d+1)$, we have  
\begin{align*}
\label{eq:gw-gv2}
	(t_{w(d+1)} - t_{w(d)})(g(w)-g(w\tau))
	&= (t_{w(d+1)} - t_{w(d)})g(w) + (t_{w\tau(d+1)} - t_{w\tau(d)})g(w\tau) \\
	&= \tilde{f}(w) -\tilde{f}(\wc\tau) + \tilde{f}(w\tau)-\tilde{f}(\wc)
	\equiv 0 \mod (t_{w(d+1)}-t_{w(d)})^2,
\end{align*}
where the second identity follows from \eqref{eq:gw} and the last congruence relation follows from  \eqref{eq:square} for $\tilde f \in H^*_T(\tG,s)$.  
Hence we obtain  $$g(w) \equiv g(w\tau)  \mod{(t_{w(d+1)}-t_{w(d)})}.$$
This together with \eqref{eq:g_d+1_d_not} shows that $g\in H^{*-2}_T(\G)$. 
\end{proof}

\begin{proof}[Proof of the latter part in Theorem~\ref{thm:main}] 
It is easy to see $\eta(H^*_T(\G_+))\cap \rho_!(H^{*-2}_T(\G_-))=\{0\}$ and $\eta, \rho_!$ are injective,
so it suffices to prove the surjectivity of $\eta+\rho_!$. 

Take any element $\tilde{f}\in H^*_T(\tG,s)$.
Since the label on the edge $\{w,\wc\}$ is $t_{w(d+1)}-t_{w(d)}$, there is $p\in \mathrm{Map}(\Sn,H^*(BT))$ such that 
\begin{equation}
\label{eq:tilde_f_divisible}
	\tilde{f}(w)-\tilde{f}(\wc) = (t_{w(d+1)}-t_{w(d)})p(w) \quad \text{for $w\in \Sn$}.
\end{equation}
We show that $p\in H^*_T(\G_-)$. The argument is the same as in the former case for $g$ in \eqref{eq:g} being in $H^{*-2}_T(\G)$.  

For a transposition $(i,j)$ with $j<i\le h_-(j)$, we set $v=w(i,j)$ as before.  Then, since $\tilde{f}\in H^*_T(\tG,s)$, we have  
\begin{equation} \label{eq:tildefwv}
\tilde{f}(w)-\tilde{f}(v)\equiv 0\equiv \tilde{f}(\wc)-\tilde{f}(\vc)\mod{(t_{w(i)}-t_{w(j)})}.
\end{equation}
It follows from \eqref{eq:vdwd}, \eqref{eq:tilde_f_divisible} and \eqref{eq:tildefwv} that 
\begin{equation*} 
\begin{split}
	(t_{w(d+1)}-t_{w(d)})(p(w)-p(v))
	&\equiv (t_{w(d+1)}-t_{w(d)})p(w)-(t_{v(d+1)}-t_{v(d)})p(v)\\
	&= \tilde{f}(w)-\tilde{f}(\wc)-(\tilde{f}(v)-\tilde{f}(\vc))\\
	&\equiv 0\mod{(t_{w(i)}-t_{w(j)})}.
\end{split}
\end{equation*}
Therefore, 
\begin{equation} 
\label{eq:p_d_d+1_not}
	p(w)\equiv p(v)\mod{(t_{w(i)}-t_{w(j)})}\quad \text{when $(i,j)\not = (d+1,d)$}.
\end{equation}

When $(i,j)=(d+1,d)$, we have $v=w\tau$ and it follows from \eqref{eq:tilde_f_divisible} and \eqref{eq:square} for $\tilde{f}$ that 
\[
	(t_{w(d+1)}-t_{w(d)})(p(w)-p(w\tau))
	=\tilde{f}(w)-\tilde{f}(\wc)+\tilde{f}(w\tau)-\tilde{f}(\wc\tau)
	\equiv 0 \mod (t_{w(d+1)}-t_{w(d)})^2.
\]
Hence we obtain 
\begin{equation*}
	p(w)\equiv p(w\tau)\mod{(t_{w(d+1)}-t_{w(d)})}.
\end{equation*}
This together with \eqref{eq:p_d_d+1_not} shows that $p\in H^{*-2}_T(\G_-)$.

We shall observe that $\tilde{f}+\rho_!(p)\in \eta(H^*_T(\G_+))$, which implies the surjectivity of $\eta+\rho_!$.  It follows from the definition of $\rho_!$ that 
\[
\begin{split}
	(\tilde{f}+\rho_!(p))(w)&=\tilde{f}(w)+(t_{w(d)}-t_{w(d_0)})p(w),\\
	(\tilde{f}+\rho_!(p))(\wc)&=\tilde{f}(\wc)+(t_{w(d+1)}-t_{w(d_0)})p(w)\\
	&=\tilde{f}(w)-(t_{w(d+1)}-t_{w(d)})p(w)+(t_{w(d+1)}-t_{w(d_0)})p(w)
	\qquad (\text{by \eqref{eq:tilde_f_divisible}})\\
	&=\tilde{f}(w)+(t_{w(d)}-t_{w(d_0)})p(w).
\end{split}
\]
Therefore, 
\begin{equation}
\label{eq:rho!(p)}
	(\tilde{f}+\rho_!(p))(w)=(\tilde{f}+\rho_!(p))(\wc) \quad\text{for any $w\in \Sn$}.
\end{equation} 
Moreover, since $\tilde{f}+\rho_!(p)\in H^*_T(\tG,s)$, $\tilde{f}+\rho_!(p)$ restricted to the subgraphs $\G$ and $\Gc$ satisfies the congruence relation for them and hence for $\G_+$.
This together with \eqref{eq:rho!(p)} shows that $\tilde{f}+\rho_!(f)\in \eta(H^*_T(\G_+))$, proving the surjectivity of $\eta+\rho_!$. 
\end{proof}

\section{The case of twins}
\label{sect:6}

In this section we point out how our argument changes for the proof of the modular law for twins $Y(h)$.

The following remark corresponds to Remark \ref{rem:Xht}
and it is sufficient to consider the modular triple of type (C).
\begin{remark}
	The correspondence $Tg \mapsto TgJ$ gives a diffeomorphism $Y(h) \cong Y(h^t)$.
	In terms of GKM graphs, an edge $\{w, w(i,j)\}$ corresponds to $\{ww_0, w(i,j)w_0\}$.
	This gives an isomorphism between GKM graphs of $Y(h)$ and $Y(h^t)$ with a change of the labels
	$t_i - t_j \mapsto w_0(t_i - t_j) = t_{n+1-i} - t_{n+1-j}$.
	This induces a weakly $H^*(BT)$-algebra isomorphism $H^*_T(Y(h^t)) \to H^*_T(Y(h))$ which commutes with the dagger action \eqref{eq:dagger_action}.
\end{remark}

We define labeled graphs $\Gc_Y$ and $\tG_Y$ as follows:
the underlying graphs are the same as in the case of $X(h)$,
and the label on each edge is changed from $t_{w(i)}-t_{w(j)}$ to $t_i - t_j$.
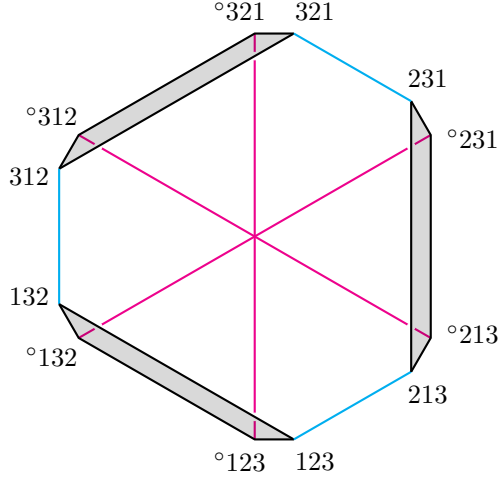
\begin{figure}[H]
\begin{center}
\begin{tikzpicture}
	\draw [thick, magenta] (0,2.7)--(0,-2.7);
	\draw [thick, magenta] (2.34,1.35)--(-2.34,-1.35);
	\draw [thick, magenta] (2.34,-1.35)--(-2.34,1.35);
	\filldraw [white](0,2.4) circle (0.05);
	\filldraw [white](0,-2.4) circle (0.05);
	\filldraw [white](-2.08,1.2) circle (0.05);
	\filldraw [white](-2.08,-1.2) circle (0.05);
	\filldraw [white](2.08,1.2) circle (0.05);
	\filldraw [white](2.08,-1.2) circle (0.05);
	\draw [fill = black, opacity=0.15] (-2.6,0.9)--(-2.34,1.35)--(0,2.7)--(0.52,2.7);
	\draw [thick] (-2.6,0.9)--(-2.34,1.35)--(0,2.7)--(0.52,2.7);
	\draw [thick] (0.52,2.7)--(-2.6,0.9);
	\draw [thick, cyan] (-2.6,0.9)--(-2.6,-0.9);
	\draw [fill = black, opacity=0.15] (-2.6,-0.9)--(-2.34,-1.35)--(0,-2.7)--(0.52,-2.7);
	\draw [thick] (-2.6,-0.9)--(-2.34,-1.35)--(0,-2.7)--(0.52,-2.7);
	\draw [thick] (0.52,-2.7)--(-2.6,-0.9);
	\draw [thick, cyan] (0.52,-2.7)--(2.08,-1.8);
	\draw [fill = black, opacity=0.15] (2.08,1.8)--(2.34,1.35)--(2.34,-1.35)--(2.08,-1.8);
	\draw [thick] (2.08,1.8)--(2.34,1.35)--(2.34,-1.35)--(2.08,-1.8);
	\draw [thick] (2.08,-1.8)--(2.08,1.8);
	\draw [thick, cyan] (2.08,1.8)--(0.52,2.7);
	\draw(0.8,-3) node{$123$};
	\draw(-0.2,-3) node{${}^\circ123$};
	\draw(0.8,3) node{$321$};
	\draw(-0.2,3) node{${}^\circ321$};
	\draw(2.3,-2.1) node{$213$};
	\draw(2.9,-1.3) node{${}^\circ213$};
	\draw(2.3,2.1) node{$231$};
	\draw(2.9,1.3) node{${}^\circ231$};
	\draw(-3,-0.8) node{$132$};
	\draw(-2.7,-1.6) node{${}^\circ132$};
	\draw(-3,0.8) node{$312$};
	\draw(-2.7,1.6) node{${}^\circ312$};
\end{tikzpicture}
\caption{Labeled graph $\tG_Y$ with emphasized $4$-gons when $h = (2,3,3)$ and $d = 2$.}
\label{fig:tGY}
\end{center}
\end{figure}

\begin{lemma}
The map $\Phi \colon w \mapsto \wc \tau$ gives an isomorphism from $\G_Y$ to $\Gc_Y$ through the automorphism $\tau$ that exchanges $t_d$ and $t_{d+1}$.
In particular, $H^*_T(\Gc_Y) \cong H^*_T(\G_Y)$ as $\Sn$-modules by $f \mapsto \tau(f\circ \Phi)$.
\end{lemma}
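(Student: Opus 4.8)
The plan is to lean on Lemma~\ref{lem:1} for all the combinatorics and to handle only the extra bookkeeping caused by the change of labels. First I would note that, by definition, the underlying graphs of $\G_Y$ and $\Gc_Y$ are the same as those of $\G$ and $\Gc$, so Lemma~\ref{lem:1} already says that $\Phi\colon w\mapsto\wc\tau$ is a bijection of vertex sets carrying $E(\G_Y)$ bijectively onto $E(\Gc_Y)$. Hence the only new point is how $\Phi$ transports the labels.

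So next I would compute labels. An edge $\{w,w(i,j)\}$ of $\G_Y$ carries the label $t_i-t_j$. Using the identity $(i,j)\tau=\tau(\tau(i),\tau(j))$ from the proof of Lemma~\ref{lem:1}, its image $\{\wc\tau,\wc(i,j)\tau\}$ can be rewritten as $\{\vc,\vc(\tau(i),\tau(j))\}$ with $v=w\tau$, and by the definition of the labeling of $\Gc_Y$ this edge carries the label $t_{\tau(i)}-t_{\tau(j)}=\tau(t_i-t_j)$, where $\tau$ now denotes the automorphism of $H^*(BT)$ exchanging $t_d$ and $t_{d+1}$. Thus $\Phi$ is not an isomorphism of labeled graphs on the nose, as in the $X(h)$ case, but an isomorphism \emph{through} $\tau$: it sends the label of each edge to the $\tau$-image of that label.

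Finally I would promote this to the cohomology statement. I would define $\Psi\colon H^*_T(\Gc_Y)\to H^*_T(\G_Y)$ by $\Psi(f)(w):=\tau\bigl(f(\wc\tau)\bigr)$, that is $\Psi(f)=\tau(f\circ\Phi)$. Applying the ring automorphism $\tau$ to the congruence $f(\wc\tau)\equiv f(\wc(i,j)\tau)\bmod(t_{\tau(i)}-t_{\tau(j)})$, which holds because $f\in H^*_T(\Gc_Y)$, yields $\Psi(f)(w)\equiv\Psi(f)(w(i,j))\bmod(t_i-t_j)$ for every edge of $\G_Y$; hence $\Psi$ indeed lands in $H^*_T(\G_Y)$. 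It is $\C$-linear and degree-preserving, and it is bijective since $\Phi$ is a bijection and $\tau^2=\mathrm{id}$. For $\Sn$-equivariance I would simply check, using that the dagger action $(\sigma\dagger f)(w)=f(\sigma^{-1}w)$ does not act on the coefficients, that both $\Psi(\sigma\dagger f)$ and $\sigma\dagger\Psi(f)$ send $w$ to $\tau\bigl(f({}^\circ(\sigma^{-1}w\tau))\bigr)$.

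I do not expect a deep obstacle; the only subtlety, and the one place the argument diverges from the $X(h)$ situation, is the $\tau$-twist. Because the labelings of $\G_Y$ and $\Gc_Y$ agree only after applying $\tau$, the induced map on $H^*_T$ is $\tau$-semilinear rather than $H^*(BT)$-linear, so the point to verify carefully is that this twist is invisible to the dagger action (since the dagger action does not permute the $t_i$), which is exactly what keeps $\Psi$ an isomorphism of graded $\Sn$-modules.
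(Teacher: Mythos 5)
Your proof is correct, and it fills in exactly the details that the paper omits: the lemma is stated in Section~\ref{sect:6} without proof, as a twin analogue of Lemma~\ref{lem:1} where the reader is left to adjust for the changed labels. Your route is the natural one and, as far as can be determined, coincides with what the authors have in mind: piggy-back on Lemma~\ref{lem:1} for the underlying graph bijection, recompute labels to see they transform by $\tau$ rather than being preserved (using $(i,j)\tau=\tau(\tau(i),\tau(j))$ and $\alpha_Y$ in place of $\alpha_X$), then observe that the map $\Psi(f)=\tau(f\circ\Phi)$ is $\tau$-semilinear (the paper's ``weakly $H^*(BT)$-linear'') rather than $H^*(BT)$-linear, and that this twist is transparent to the dagger action because $\dagger$ does not touch the $t_i$. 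The equivariance computation you sketch is right: both sides evaluate at $w$ to $\tau\bigl(f({}^\circ(\sigma^{-1}w\tau))\bigr)$. The one emphasis worth keeping is exactly the one you make at the end, namely that the map is a module isomorphism (over $\Sn$ and over $\C$) but only a $\tau$-twisted morphism over $H^*(BT)$, consistent with the paper's remark that $\varphi$ becomes a weakly $H^*(BT)$-algebra map in the twin setting.
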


In the case of $Y(h)$, $s$ changes to
\[
	s_Y(w) = (-1)^{l(w)}, \quad s_Y({}^\circ w) =  (-1)^{l(w)+1} \quad \text{for any } w \in \Sn,
\]
where $l$ is the length function.
Then we have
\[
	H^*_T(\tG_Y,s_Y) = \Set{f \in H^*_T(\tG_Y) | \sum_{v \in V(K_w)} s_Y(v)f(v) \equiv 0 \bmod{(t_{d} - t_{d+1})^2} \text{ for any } w \in \Sn}.
\]
In other words, $H^*_T(\tG_Y, s_Y)$ consists of all $f \in H^*_T(\tG_Y)$ satisfying
\[
	f(w) - f(\wc) +f(\wc\tau) - f(w\tau) \equiv 0 \mod{(t_{d+1}-t_{d})^2}
\]
for any $w \in \Sn$.

In Section \ref{sect:4}, we defined $4$ maps $\varphi, \psi_!, \eta,\rho_!$.
The necessary changes for them are replacing $x$ with $t$ and redefining $\varphi(f)(w):=\tau(f(\wc\tau))$.
This change arises from the change of labels.
Note that $\varphi$ is a weakly $H^*(BT)$-algebra map in this case.
Then all the maps commute with the dagger action. 
The dagger actions on $H^*_T(\Gc_Y)$ and $H^*_T(\tG_Y,s_Y)$ are defined in the same way as that on $H^*_T(\G_Y)$,
that is, $(\sigma \dagger f)({}^\circ w) = f({}^\circ \sigma^{-1} w)$ for $\sigma \in \Sn$.

\appendix

\section{Geometrical counterpart}
\label{app:tX}

We shall show a geometrical object corresponding to $\tG$ for geometrical understanding. 
It is $\tX(h)$ defined as follows.
When $V_\bullet \in X(h_+)$, the dimension of $SV_{d_0}/(V_{d-1} \cap SV_{d_0})$ is 1 or 0
since $SV_{d_0-1} \subset V_{h(d_0-1)} \subset V_{d-1}$.
In particular, $\dim SV_{d_0}/(V_{d-1} \cap SV_{d_0}) = 0$ implies
$SV_{d_0} \subset V_{d-1}$ and then $V_\bullet \in X(h_-)$.
Hence
\[
	\tX(h) = \Set{(V_\bullet, l) | V_\bullet \in X(h_+),\, l \in P(V_{d+1}/V_{d-1}), \, SV_{d_0} \subset l + V_{d-1}}
\]
is the blow-up of $X(h_+)$ along $X(h_-)$.
Moreover,  for $(V_\bullet, l)$,  the correspondence
\begin{equation}
\label{eq:fibration}
	(V_\bullet, l) \mapsto (V_0 \subset V_1 \subset \cdots \subset V_{d-1} \subset l + V_{d-1} \subset V_{d+1} \subset \cdots \subset V_{n}),
\end{equation}
that is, replacing $V_d$ by $l + V_{d-1}$ gives a map $\tX(h) \to X(h)$ and it is a fiber bundle with fiber $\C P^1$ since $h(d) = h(d+1)$.
This fiber $\C P^1$ means the direction of a line $V_d/V_{d-1}$ in a plane $V_{d+1}/V_{d-1}$.
The torus $T$ acts on $\tX(h)$ naturally and the fixed point set is
\[
	\tX(h)^T = \Set{(V_\bullet(w),l) | 
	w \in \Sn,\, l = \langle \e_{w(d)}\rangle \text{ or } \langle \e_{w(d+1)}\rangle},
\]
where $V_\bullet(w)$ denotes the permutation flag associated with $w$.
We denote the fixed point $(V_\bullet(w), \langle \e_{w(d)}\rangle)$ as $w$ and $(V_\bullet(w), \langle \e_{w(d+1)}\rangle)$ as ${}^\circ w$ in $\tG$.
The $4$-gon $K_w$ with vertex set $\{w, {}^\circ w, w\tau, {}^\circ w \tau \}$ in $\tG$
corresponds to $\C P^1 \times \C P^1$ in $\tX(h)$
which is the inverse image of $\{V_\bullet \mid V_k = V_k(w) \text{ for } k \neq d\} = \C P^1 \subset X(h)$ under $\eqref{eq:fibration}$.
\begin{figure}[H]
\begin{center}
\begin{tikzpicture}
	\draw [thick] (0,0)--(0,2.5)--(2.5,2.5)--(2.5,0)--cycle;
	\draw [thick, magenta] (0,0)--(2.5,2.5);
	\draw [magenta] (1.6,0.45) node {$l = \langle \e_{w(d)}\rangle$};
	\draw (0,0)[fill = black] circle (3pt);
	\draw (0,2.5)[fill = black] circle (3pt);
	\draw (2.5,2.5)[fill = black] circle (3pt);
	\draw (2.5,0)[fill = black] circle (3pt);
	\draw(-1.8,0) node{$(V_\bullet(w),\langle \e_{w(d)}\rangle) \leftrightarrow w$};
	\draw(-2.1,2.5) node{$(V_\bullet(w),\langle \e_{w(d+1)}\rangle) \leftrightarrow \wc$};
	\draw(4.7,0) node{$w \tau \leftrightarrow (V_\bullet(w\tau),\langle \e_{w\tau(d)}\rangle)$};
	\draw(4.9,2.5) node{$\wc \tau \leftrightarrow (V_\bullet(w\tau),\langle \e_{w\tau(d+1)}\rangle)$};
	\draw (5.5,2.2)--(6.7,2.2);
	\draw (6.2,1.9) node {$=\e_{w(d)}$};
\end{tikzpicture}
\caption{$K_w$ and the geometrical counterpart.}
\label{fig:square}
\end{center}
\end{figure}
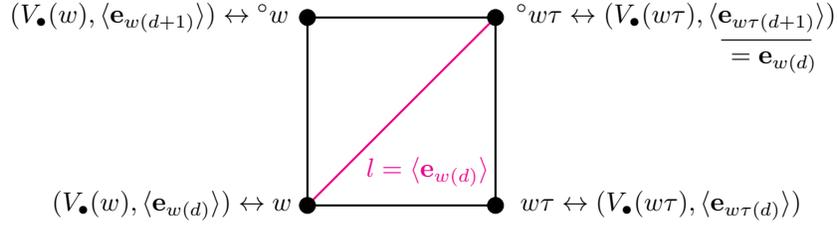
See Figure \ref{fig:square}, and note that
\begin{itemize}
	\item
	taking $V_\bullet = V_\bullet(w)$ or $V_\bullet(w\tau)$ leads us to $\{0,\infty\} \times \C P^1$ which connects $w$ with ${}^\circ w$ and $w\tau$ with ${}^\circ w\tau$,
	\item
	taking $l = \langle \e_{w(d)}\rangle$ or $\langle \e_{w(d+1)}\rangle$ leads us to $\C P^1 \times \{0,\infty\}$ which connects $w$ with ${}^\circ w\tau$ and $w\tau$ with ${}^\circ w$,
	\item
	taking $l = V_d/V_{d-1}$ or $(V_d/V_{d-1})^\perp$ leads us to the ``diagonal'' $\C P^1$'s which connect $w$ with $w \tau$ and ${}^\circ w$ with ${}^\circ w \tau$.
\end{itemize}
This geometrical situation warrants the signs of vertices in \eqref{eq:square}.

\begin{remark}
The labeled graph $\tG_Y$ corresponds to the blow-up $\tY(h)$ of $Y(h_+)$ along $Y(h_-)$,
where Kiem and Lee write $Y(h)$ as $Y_h$ and $\tY(h)$ as $\tY_{\mathbf{h}}$ in \cite{ki-le23}.
See \cite[(4.10) and (4.12)]{ki-le23} for $\C P^1 \times \C P^1$ in $\tY(h)$ to warrant $s_Y$
in Section \ref{sect:6}.
\end{remark}

\begin{remark}	
At the end of this appendix, we mention an interesting phenomenon.
Recall that we have a fiber bundle
\[
	\C P^1 \to \tX(h) \to X(h).
\]
On the other hand, according to Kiem and Lee \cite{ki-le23},
there is a ``reverse'' fiber bundle
\[
	Y(h) \to \tY(h) \to \C P^1.
\]

These fibrations can be described as homomorphisms of the corresponding labeled graphs.
A graph homomorphism may collapse an edge to a vertex.
Note that, for $f \in H^*_T(\tG)$, we have
\[
	f(w) - f(\wc \tau) = f(w) - f(w\tau) + f(w \tau) - f(\wc \tau) \equiv 0 \mod(t_{w(d+1)} - t_{w(d)}).
\]
In this remark, we redefine $\tG$ so that it has additional edges $\{\{w, \wc \tau\} \mid w \in \Sn\}$ with label $t_{w(d+1)} - t_{w(d)}$.
This modification does not change its (equivariant) cohomology ring.
The labeled graph homomorphism corresponding to $\tX(h) \to X(h)$ defined by $\eqref{eq:fibration}$ is given by
\[
	p \colon \tG \to \G, \quad p(w) = p(\wc \tau) = w
\]
with a fiber GKM graph which is the induced labeled subgraph on $\{w, \wc \tau\}$
(see Figure $\ref{fig:square}$).
The fiber is the GKM graph of $\C P^1 = P(\langle \e_{w(d)} , \e_{w(d+1)}\rangle)$.
On the other hand,
let $\Gamma_\tau$ be the GKM graph of $\C P^1 = P(\langle \e_{d} , \e_{d+1}\rangle)$ with vertices $0$ and $\infty$.
Then we have
\[
	q \colon \tG_Y \to \Gamma_\tau, \quad q(w) = 0, \ q(\wc) = \infty
\]
with a fiber GKM graph $\G_Y \cong \Gc_Y$.
\end{remark}

\section{Unicellular LLT polynomials}

It is known that unicellular LLT polynomials satisfy the modular law.
For example, see \cite[Proposition 18]{alex21}.
We give an elementary proof for readers' convenience.

\begin{theorem}
\label{thm:ml_LLT}
	Unicellular LLT polynomials satisfy the modular law.
\end{theorem}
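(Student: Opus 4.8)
The plan is to establish the modular law for unicellular LLT polynomials directly from the combinatorial formula
\[
	\LLT = \sum_{\kappa \in C(G_h)} \mathbf{z}_\kappa q^{\asc(\kappa)},
\]
by comparing colorings for the three members of a modular triple. By Remark~\ref{rem:2} and Remark~\ref{rem:1} it is enough to treat a modular triple $(h_-, h, h_+)$ of type (C), so that $h(d) = h(d+1)$, $h^{-1}(d) = \{d_0\}$, and $h_\pm$ differ from $h$ only in the value at $d_0$, which becomes $d\mp 1$ for $h_-$ and $d+1$ for $h_+$. Thus $G_h$, $G_{h_-}$, $G_{h_+}$ all have vertex set $[n]$ and identical edge sets except for the single edge $\{d_0, d\}$: it is present in $G_h$ and $G_{h_+}$ but absent in $G_{h_-}$, while $G_{h_+}$ additionally has the edge $\{d_0, d+1\}$. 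All edges incident to $d$ and $d+1$ other than these coincide in the three graphs because $h(d)=h(d+1)$ forces $d$ and $d+1$ to have the same ``upward'' neighborhood, and $h^{-1}(d)=\{d_0\}$ controls the ``downward'' neighbors.

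First I would fix an arbitrary coloring $\kappa \in C(G_h) = C(G_{h_-}) = C(G_{h_+})$ (the underlying vertex set is the same, so an $\N$-coloring is just a function $[n]\to\N$ in each case) and track how $\mathbf{z}_\kappa q^{\asc(\kappa)}$ contributes on each side of
\[
	(1+q)\LLT[h] = \LLT[h_+] + q\,\LLT[h_-].
\]
Since $\mathbf{z}_\kappa$ depends only on $\kappa$, it factors out, and the identity reduces to a purely local statement about the ascent statistic. Write $\asc_h(\kappa)$ for the ascent count in $G_h$, and similarly $\asc_{h_\pm}$. Because the edge sets differ only near $\{d_0, d, d+1\}$, the contribution of every edge other than $\{d_0,d\}$ and $\{d_0,d+1\}$ to $\asc$ is the same in all three graphs; call this common quantity $c(\kappa)$. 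Then
\[
	\asc_h(\kappa) = c(\kappa) + [\kappa(d_0) < \kappa(d)],\quad
	\asc_{h_+}(\kappa) = c(\kappa) + [\kappa(d_0)<\kappa(d)] + [\kappa(d_0)<\kappa(d+1)],
\]
\[
	\asc_{h_-}(\kappa) = c(\kappa).
\]
So the desired identity becomes, after dividing by $q^{c(\kappa)}\mathbf{z}_\kappa$ and summing,
\[
	(1+q)\sum_\kappa \mathbf{z}_\kappa q^{[\kappa(d_0)<\kappa(d)]}
	= \sum_\kappa \mathbf{z}_\kappa q^{[\kappa(d_0)<\kappa(d)]+[\kappa(d_0)<\kappa(d+1)]}
	+ q\sum_\kappa \mathbf{z}_\kappa.
\]

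The main obstacle, and the real content, is that $d$ and $d+1$ play symmetric roles with respect to all the other edges: any edge $\{j,d\}$ with $j\neq d_0$ is matched by the edge $\{j,d+1\}$ (and vice versa) because $h(d)=h(d+1)$ and because $d,d+1$ have no lower neighbors besides $d_0$. This means I can pair each coloring $\kappa$ with the coloring $\kappa'$ obtained by swapping the values $\kappa(d) \leftrightarrow \kappa(d+1)$; this is an involution on $C(G_h)$ that preserves $\mathbf{z}_\kappa$ and preserves $c(\kappa)$ (here one must check carefully that swapping the colors at $d$ and $d+1$ does not change the ascent count coming from the other edges — this is where $h(d)=h(d+1)$ and $h^{-1}(d)=\{d_0\}$ are used, since then the multiset of ``other'' edges at $d$ equals that at $d+1$, shifted). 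Under this involution the pair of indicators $([\kappa(d_0)<\kappa(d)], [\kappa(d_0)<\kappa(d+1)])$ gets swapped. Partitioning colorings according to the unordered pair of values at $d$ and $d+1$ relative to $\kappa(d_0)$, the two indicators together take values in $\{(0,0),(0,1),(1,0),(1,1)\}$, and the $(0,1)$ and $(1,0)$ classes are exchanged by the involution and hence contribute equally. A short bookkeeping check — either by this pairing argument or, even more cleanly, by summing the geometric-type generating identity $(1+q) q^{a+b\over 2}\cdots$ — verifies the displayed polynomial identity termwise, which completes the proof. I would also remark that the appearance of the transpose symmetry $\LLT[h^t] = \LLT[h]$, needed to reduce to type (C), follows from the analogous symmetry of $G_{h^t}$ under $i\mapsto n+1-i$ together with the fact that $\asc$ is invariant under this relabeling composed with the color reversal.
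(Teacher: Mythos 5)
Your overall structure matches the paper's proof in the appendix: reduce to a type-(C) triple, decompose colorings by the indicators $[\kappa(d_0)<\kappa(d)]$ and $[\kappa(d_0)<\kappa(d+1)]$, and pair the mixed classes via the swap $\kappa\mapsto\kappa\circ\tau$ with $\tau=(d,d+1)$. However, there is a genuine gap in the central claim that this involution \emph{preserves} $c(\kappa)$ (your shorthand for $\asc_{h_-}(\kappa)$), and consequently in the assertion that the identity can be ``verified termwise''.

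The point you are missing: $\{d,d+1\}$ \emph{is} an edge of $G_{h_-}$ (indeed of all three graphs), because $h_-(d)=h(d)=h(d+1)\ge d+1$. That edge therefore contributes to $c(\kappa)$, and its ascent status flips under the swap whenever $\kappa(d)\ne\kappa(d+1)$. In particular, for a coloring in the $(1,0)$ class (i.e.\ $\kappa(d_0)<\kappa(d)$ and $\kappa(d_0)\ge\kappa(d+1)$) one automatically has $\kappa(d)>\kappa(d+1)$, so $\{d,d+1\}$ is not an ascent for $\kappa$ but \emph{is} an ascent for $\kappa\circ\tau$. Hence $c(\kappa\circ\tau)=c(\kappa)+1$, not $c(\kappa)$. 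Relatedly, the reduced identity
\[
	(1+q)\,q^{c(\kappa)+[\kappa(d_0)<\kappa(d)]}
	= q^{c(\kappa)+[\kappa(d_0)<\kappa(d)]+[\kappa(d_0)<\kappa(d+1)]} + q\cdot q^{c(\kappa)}
\]
is \emph{false} on the $(1,0)$ and $(0,1)$ classes taken one coloring at a time (on the $(1,0)$ class the left side is $q^{c+1}+q^{c+2}$ and the right side is $2q^{c+1}$), so no ``termwise bookkeeping'' can close the argument. What saves the proof is precisely the shift $c(\kappa\circ\tau)=c(\kappa)+1$: the discrepancy $q^{c(\kappa)+2}-q^{c(\kappa)+1}$ from the $(1,0)$ coloring $\kappa$ is cancelled by the discrepancy $q^{c(\kappa\circ\tau)}-q^{c(\kappa\circ\tau)+1}=q^{c(\kappa)+1}-q^{c(\kappa)+2}$ from the paired $(0,1)$ coloring $\kappa\circ\tau$. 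This is exactly the paper's equation $\asc_-(\kappa)+1=\asc_-(\kappa\circ\tau)$ and the resulting relation $\sum_{C_{<\geq}}\mathbf{z}_\kappa q^{\asc_-(\kappa)} = \tfrac{1}{q}\sum_{C_{\geq<}}\mathbf{z}_\kappa q^{\asc_-(\kappa)}$. Once you replace ``the involution preserves $c(\kappa)$'' and ``verified termwise'' with this $\pm 1$ shift and the paired cancellation, the argument is correct and coincides with the paper's.
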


\begin{proof}
	Let $1\leq d_0<d<n$ and $h$ be a Hessenberg function which satisfies the condition $\mathrm{(C)}$, that is,
	$h(d)=h(d+1)$ and $h^{-1}(d)=\{d_0\}$.
	Then $h_-$ and $h_+$ are as follows
	\[
		h_-(j)=
		\begin{cases}
			d-1 & (j = d_0)\\
			h(j) & (j \neq d_0) 
		\end{cases}
		\quad \text{and} \quad
		h_+(j)=
		\begin{cases}
			d+1 & (j = d_0)\\
			h(j) & (j \neq d_0)
		\end{cases}.
	\]
	For this modular triple $(h_-,h,h_+)$, what we have to show is
	\begin{equation}
	\label{eq:ml_LLT}
		\LLT[h_+] - \LLT = q(\LLT - \LLT[h_-] ).
	\end{equation}
	
	Let $C = C(G_{h_-})$ be the set of all colorings of $G_{h_-}$, that is, all maps $[n] \to \N$.
	Then $C$ is decomposed into the following four subsets:
	\begin{itemize}
		\item
		$C_{<<} = \Set{\kappa \in C | \kappa(d_0) < \kappa(d), \, \kappa(d_0) < \kappa(d+1)}$
		\item
		$C_{<\geq} = \Set{\kappa \in C | \kappa(d_0) < \kappa(d), \, \kappa(d_0) \geq \kappa(d+1)}$
		\item
		$C_{\geq<} = \Set{\kappa \in C | \kappa(d_0) \geq \kappa(d), \, \kappa(d_0) < \kappa(d+1)}$
		\item
		$C_{\geq \geq} = \Set{\kappa \in C | \kappa(d_0) \geq \kappa(d), \, \kappa(d_0) \geq \kappa(d+1)}$.
	\end{itemize}
	Let
	\[
		\Asc_-(\kappa) = \Set{\{i,j\}\in E(G_{h_-}) | j<i,\ \kappa(j)<\kappa(i)}.
	\]
	For a coloring $\kappa$ of $G_{h_-}$, we write $\asc_-(\kappa)$ instead of $\asc(\kappa)$ in Appendices to avoid confusions.
	By definition
	\[
		\asc_-(\kappa) = |\Asc_-(\kappa)|.
	\]
	Note that $\{d+1,d\}$ is an edge of $G_{h_-}$ and then
	\begin{itemize}
		\item
		$\kappa \in C_{<\geq} \Rightarrow \kappa(d) > \kappa(d+1) \Leftrightarrow \{d+1,d\} \not \in \Asc_-(\kappa)$ and
		\item
		$\kappa \in C_{\geq<} \Rightarrow \kappa(d) < \kappa(d+1)\Leftrightarrow \{d+1,d\} \in \Asc_-(\kappa)$.
	\end{itemize}
	Let $\tau = (d+1,d) \in \Sn$.
	For a coloring $\kappa$ of $G_{h_-}$,
	the composition $\kappa \circ \tau $ is also a coloring of $G_{h_-}$.
	Then the composition with $\tau$ gives a bijection $C_{<\geq} \to C_{\geq<}$.
	When $e \neq d, d+1$, we have
	\[
		\{ d, e\} \in \Asc_-(\kappa) \Leftrightarrow \{ d+1, e\} \in \Asc_-(\kappa \circ \tau)
		\quad \text{and} \quad
		\{ d+1, e\} \in \Asc_-(\kappa) \Leftrightarrow \{ d, e\} \in \Asc_-(\kappa \circ \tau).
	\]
	Hence, for $\kappa \in C_{<\geq}$, we have
	\begin{equation}
	\label{eq:asc_LLT}
		\asc_-(\kappa) + 1 = \asc_-(\kappa \circ \tau)
	\end{equation}
	since $\{d+1,d\} \not \in \Asc_-(\kappa)$ and $\{d+1,d\} \in \Asc_-(\kappa \circ \tau)$.
	Since the subscripts of the decomposition of $C$ corresponds to the ascents of the edges $\{d,d_0\}$ and $\{d+1,d_0\}$ which are not contained in $G_{h_-}$,
	we have
	\begin{align*}
		\LLT[h_+]
		&= q^2 \sum_{\kappa \in C_{<<}} z_\kappa q^{\asc_-(\kappa)}
		+ q\sum_{\kappa \in C_{<\geq}} z_\kappa q^{\asc_-(\kappa)}
		+ q\sum_{\kappa \in C_{\geq<}} z_\kappa q^{\asc_-(\kappa)}
		+ \sum_{\kappa \in C_{\geq\geq}} z_\kappa q^{\asc_-(\kappa)},\\
		\LLT 
		&= \phantom{{}^2}q\sum_{\kappa \in C_{<<}} z_\kappa q^{\asc_-(\kappa)}
		+ q\sum_{\kappa \in C_{<\geq}} z_\kappa q^{\asc_-(\kappa)}
		+ \phantom{q}\sum_{\kappa \in C_{\geq<}} z_\kappa q^{\asc_-(\kappa)}
		+ \sum_{\kappa \in C_{\geq\geq}} z_\kappa q^{\asc_-(\kappa)},\\
		\LLT[h_-]
		&= \phantom{q^2}\sum_{\kappa \in C_{<<}} z_\kappa q^{\asc_-(\kappa)}
		+ \phantom{q}\sum_{\kappa \in C_{<\geq}} z_\kappa q^{\asc_-(\kappa)}
		+ \phantom{q}\sum_{\kappa \in C_{\geq<}} z_\kappa q^{\asc_-(\kappa)}
		+ \sum_{\kappa \in C_{\geq\geq}} z_\kappa q^{\asc_-(\kappa)}.
	\end{align*}
	Now we compute each side of \eqref{eq:ml_LLT}:
	\begin{align*}
		\LLT[h_+] - \LLT &= ( q^2- q) \sum_{\kappa \in C_{<<}} z_\kappa q^{\asc_-(\kappa)} + ( q- 1) \sum_{\kappa \in C_{\geq<}} z_\kappa q^{\asc_-(\kappa)},\\
		\LLT - \LLT[h_-] &= \phantom{{}^2} ( q- 1) \sum_{\kappa \in C_{<<}} z_\kappa q^{\asc_-(\kappa)} + ( q- 1) \sum_{\kappa \in C_{<\geq}} z_\kappa q^{\asc_-(\kappa)}.
	\end{align*}
	By \eqref{eq:asc_LLT} and the bijection $C_{<\geq} \to C_{\geq<}$, we have
	\begin{equation}
	\label{eq:color_sum}
		\sum_{\kappa \in C_{<\geq}} z_\kappa q^{\asc_-(\kappa)}
		= \sum_{\kappa \in C_{<\geq}} z_\kappa q^{\asc_-(\kappa \circ \tau)-1}
		= \frac1q\sum_{\kappa \in C_{\geq<}} z_\kappa q^{\asc_-(\kappa)}.
	\end{equation}
	This shows \eqref{eq:ml_LLT}.
	
	The same argument shows the modular relation for modular triples of type $\mathrm{(R)}$.
\end{proof}

\begin{remark}
	Our proof is in the same way as Alexandersson's proof for \cite[Proposition 18]{alex21} in essential.
\end{remark}

\section{Chromatic symmetric functions}

We also give an elementary, direct proof of the modular law for $\csf$.
As far as we know, other proofs of the modular law for $\csf$ use the modular law for $\LLT$ and the relation between $\csf$ and $\LLT$ by \cite[Proposition 3.5]{ca-me17}.

\begin{theorem}
	Chromatic symmetric functions satisfy the modular law.
\end{theorem}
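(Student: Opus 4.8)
The plan is to mimic, almost verbatim, the proof of Theorem~\ref{thm:ml_LLT} for unicellular LLT polynomials, since the chromatic symmetric function $\csf$ is obtained from $\LLT$ by restricting the sum to \emph{proper} colorings. First I would fix $1\le d_0<d<n$ and a Hessenberg function $h$ satisfying condition $(\mathrm{C})$, together with the associated triple $(h_-,h,h_+)$, so that the identity to prove is
\[
\mathrm{csf}_{h_+}(q)-\mathrm{csf}_h(q)=q\bigl(\mathrm{csf}_h(q)-\mathrm{csf}_{h_-}(q)\bigr).
\]
The key observation is that $G_{h_-}$, $G_h$, $G_{h_+}$ differ only in the edges $\{d_0,d\}$ and $\{d_0,d+1\}$: $G_{h_-}$ has neither, $G_h$ has $\{d_0,d\}$ only, and $G_{h_+}$ has both. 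Hence a proper coloring of $G_{h_+}$ is a proper coloring of $G_{h_-}$ that in addition satisfies $\kappa(d_0)\neq\kappa(d)$ and $\kappa(d_0)\neq\kappa(d+1)$, and similarly for $G_h$.

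Next I would take $PC=PC(G_{h_-})$ and decompose it according to the relation between $\kappa(d_0)$ and each of $\kappa(d),\kappa(d+1)$. Because properness of $G_{h_-}$ already forces $\kappa(d_0)\neq\kappa(d)$ and $\kappa(d_0)\neq\kappa(d+1)$ (the edges $\{d_0-\text{neighbors}\}$\dots actually $d_0$ is \emph{not} adjacent to $d$ or $d+1$ in $G_{h_-}$, so these inequalities need \emph{not} hold; the genuine constraints from $G_{h_-}$ only involve other edges). So the decomposition is into
\[
PC_{<<},\quad PC_{<>},\quad PC_{><},\quad PC_{>>},
\]
where the first subscript records whether $\kappa(d_0)<\kappa(d)$ or $\kappa(d_0)>\kappa(d)$, the second similarly for $d+1$; the cases $\kappa(d_0)=\kappa(d)$ or $\kappa(d_0)=\kappa(d+1)$ are simply absent from $PC(G_{h_-})$, which is exactly the difference from the LLT case and actually \emph{simplifies} matters. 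Then $PC(G_h)=PC_{<<}\cup PC_{<>}\cup PC_{><}\cup PC_{>>}$ minus the colorings with $\kappa(d_0)=\kappa(d)$, of which there are none, i.e.\ $PC(G_h)=PC(G_{h_-})$ as \emph{sets}, and likewise $PC(G_{h_+})=PC(G_{h_-})$; only the ascent statistic $\asc$ changes because the edge $\{d_0,d\}$ (resp.\ $\{d_0,d+1\}$) is counted.

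I would then record, exactly as in the LLT proof, that on each piece the ascent numbers for $h_+$, $h$, $h_-$ differ by explicit powers of $q$ coming from whether $\{d_0,d\}$ and $\{d_0,d+1\}$ are ascending edges: on $PC_{<<}$ one gains $q^2,q,1$ respectively; on $PC_{><}$ one gains $q,1,1$; on $PC_{<>}$ one gains $q,q,1$; on $PC_{>>}$ one gains $1,1,1$. Subtracting gives
\[
\mathrm{csf}_{h_+}(q)-\mathrm{csf}_h(q)=(q^2-q)\!\!\sum_{\kappa\in PC_{<<}}\!\!\mathbf z_\kappa q^{\asc_-(\kappa)}+(q-1)\!\!\sum_{\kappa\in PC_{><}}\!\!\mathbf z_\kappa q^{\asc_-(\kappa)},
\]
\[
\mathrm{csf}_h(q)-\mathrm{csf}_{h_-}(q)=(q-1)\!\!\sum_{\kappa\in PC_{<<}}\!\!\mathbf z_\kappa q^{\asc_-(\kappa)}+(q-1)\!\!\sum_{\kappa\in PC_{<>}}\!\!\mathbf z_\kappa q^{\asc_-(\kappa)}.
\]
The remaining point is the involution $\kappa\mapsto\kappa\circ\tau$ with $\tau=(d+1,d)$: it is a bijection $PC_{<>}\to PC_{><}$ (it preserves properness of $G_{h_-}$ because $d$ and $d+1$ have the same neighborhood in $G_{h_-}$ apart from the edge $\{d,d+1\}$ joining them, which it fixes, and it swaps $\kappa(d),\kappa(d+1)$ so $\mathbf z_\kappa$ is unchanged), and it raises $\asc_-$ by exactly $1$ since the edge $\{d,d+1\}$ switches from descending to ascending while ascents/descents at other edges incident to $d,d+1$ are merely permuted. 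Hence $\sum_{PC_{<>}}\mathbf z_\kappa q^{\asc_-(\kappa)}=\tfrac1q\sum_{PC_{><}}\mathbf z_\kappa q^{\asc_-(\kappa)}$, and substituting this into the two displays shows they are equal up to the factor $q$, which is the modular relation. The last line is that the type $(\mathrm{R})$ relations follow either by the same direct argument or, more cleanly, from the $h\mapsto h^t$ symmetry noted in Remark~\ref{rem:1} together with the known invariance $\mathrm{csf}_{h^t}(q)=\mathrm{csf}_h(q)$.

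The only real subtlety — and the step I would be most careful about — is verifying that the map $\kappa\mapsto\kappa\circ\tau$ genuinely preserves \emph{properness} for $G_{h_-}$, i.e.\ that $d$ and $d+1$ have identical neighborhoods in $G_{h_-}$ other than each other. This uses $h_-(d)=h_-(d+1)$ (inherited from $h(d)=h(d+1)$) for the ``forward'' neighbors and $h^{-1}(d)=\{d_0\}$ together with $h_-(d_0)=d-1<d$ to ensure $d_0$ is \emph{not} a $G_{h_-}$-neighbor of $d$; the ``backward'' neighbors $j<d$ of $d$ in $G_{h_-}$ are exactly those with $d\le h_-(j)$, and since $h_-$ agrees with $h$ off $d_0$ and $h_-(d_0)=d-1$, these are the same as the backward neighbors of $d+1$. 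Once this is pinned down the rest is the bookkeeping above, which is completely parallel to Theorem~\ref{thm:ml_LLT}.
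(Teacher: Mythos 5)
There is a genuine gap at the very first step of your bookkeeping, and it propagates into both displayed formulas. You correctly observe in passing that $d_0$ is \emph{not} adjacent to $d$ or $d+1$ in $G_{h_-}$ (indeed $h_-(d_0)=d-1<d$, so neither $\{d_0,d\}$ nor $\{d_0,d+1\}$ is an edge), but you then conclude the opposite of what this implies: you assert that the cases $\kappa(d_0)=\kappa(d)$ and $\kappa(d_0)=\kappa(d+1)$ are ``simply absent from $PC(G_{h_-})$'' and hence that $PC(G_{h_+})=PC(G_h)=PC(G_{h_-})$ as sets. Precisely because those pairs are non-edges of $G_{h_-}$, properness for $G_{h_-}$ imposes no constraint on them, so proper colorings of $G_{h_-}$ with $\kappa(d_0)=\kappa(d)$ (or $=\kappa(d+1)$) do exist in general; they are proper for $G_{h_-}$ but \emph{not} for $G_h$ (resp.\ $G_{h_+}$). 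The only equality genuinely excluded is $\kappa(d)=\kappa(d+1)$, since $\{d,d+1\}$ \emph{is} an edge of $G_{h_-}$ (as $d+1\le h_-(d)=h(d)=h(d+1)$ requires checking $h(d)\ge d+1$, which holds because $h(d)=h(d+1)\ge d+1$). Consequently one must decompose $PC(G_{h_-})$ into the pieces indexed by $<,=,>$ in each slot (with only the $==$ piece empty), and your two difference formulas are missing the contributions of the four pieces $C_{<=}$, $C_{=<}$, $C_{=>}$, $C_{>=}$. These contributions do not vanish termwise; they must be cancelled or matched by further applications of the involution $\kappa\mapsto\kappa\circ\tau$ (which gives bijections $C_{<=}\to C_{=<}$ and $C_{=>}\to C_{>=}$, shifting $\asc_-$ by $1$ in the first case), exactly as in the paper's proof. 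This is the substantive point where the chromatic case is \emph{harder}, not easier, than the LLT case: for LLT all colorings are allowed and a four-fold decomposition with weak inequalities suffices, whereas here the equality strata survive as proper colorings of $G_{h_-}$ but get killed when passing to $G_h$ and $G_{h_+}$, and tracking them is the whole content of the argument.

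The remaining ingredients of your proposal are sound and agree with the paper: the identification of which edges distinguish $G_{h_\pm}$ from $G_h$, the verification that $d$ and $d+1$ have identical neighborhoods in $G_{h_-}$ apart from each other (so that $\kappa\circ\tau$ preserves properness), the fact that $\mathbf z_\kappa$ is unchanged and $\asc_-$ shifts by exactly $1$ under the swap, and the reduction of type $(\mathrm{R})$ to type $(\mathrm{C})$ via transposition. Once you restore the equality strata and the two extra bijections, the computation closes exactly as in the paper.
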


\begin{proof}
	Let $d$, $d_0$, $h$, $h_-$, and $h_+$ be the same ones in the the proof of Theorem \ref{thm:ml_LLT}.
	Then what we have to show is
	\begin{equation}
	\label{eq:ml_csf}
		\csf[h_+] - \csf = q(\csf - \csf[h_-] ).
	\end{equation}
	Note that $\{d+1,d\}$ is an edge of $G_{h_-}$.
	Let $C = PC(G_{h_-})$ be the set of all proper colorings of $G_{h_-}$.
	Then $C$ is decomposed into the following nine subsets.
	\begin{itemize}
		\item
		$C_{<<} = \Set{\kappa \in C | \kappa(d_0) < \kappa(d), \, \kappa(d_0) < \kappa(d+1)}$
		\item
		$C_{<=} = \Set{\kappa \in C | \kappa(d_0) < \kappa(d), \, \kappa(d_0) = \kappa(d+1)}, \qquad \kappa \in C_{<=} \Rightarrow \kappa(d) > \kappa(d+1)$
		\item
		$C_{<>} = \Set{\kappa \in C | \kappa(d_0) < \kappa(d), \, \kappa(d_0) > \kappa(d+1)}, \qquad \kappa \in C_{<>} \Rightarrow  \kappa(d) > \kappa(d+1)$
		\item
		$C_{=<} = \Set{\kappa \in C | \kappa(d_0) = \kappa(d), \, \kappa(d_0) < \kappa(d+1)}, \qquad \kappa \in C_{=<} \Rightarrow \kappa(d) < \kappa(d+1)$
		\item
		$C_{==} = \Set{\kappa \in C | \kappa(d_0) = \kappa(d), \, \kappa(d_0) = \kappa(d+1)} = \emptyset$
		\item
		$C_{=>} = \Set{\kappa \in C | \kappa(d_0) = \kappa(d), \, \kappa(d_0) > \kappa(d+1)}, \qquad \kappa \in C_{=>} \Rightarrow \kappa(d) > \kappa(d+1)$
		\item
		$C_{><} = \Set{\kappa \in C | \kappa(d_0) > \kappa(d), \, \kappa(d_0) < \kappa(d+1)}, \qquad \kappa \in C_{><} \Rightarrow \kappa(d) < \kappa(d+1)$
		\item
		$C_{>=} = \Set{\kappa \in C | \kappa(d_0) > \kappa(d), \, \kappa(d_0) = \kappa(d+1)}, \qquad \kappa \in C_{>=} \Rightarrow \kappa(d) < \kappa(d+1)$
		\item
		$C_{>>} = \Set{\kappa \in C | \kappa(d_0) > \kappa(d), \, \kappa(d_0) > \kappa(d+1)}$
	\end{itemize}
	Some proper colorings of $G_{h_-}$ are naturally considered as proper colorings of $G_h$ or $G_{h_+}$,
	that is,
	\begin{align*}
		\text{the set of all proper colorings of $G_h$ is }&
		C_{<<} \sqcup C_{<=} \sqcup C_{<>} \sqcup C_{><} \sqcup C_{>=} \sqcup C_{>>}\\
		\text{and }\text{the set of all proper colorings of $G_{h_+}$ is }&
		C_{<<} \sqcup  \phantom{,\sqcup C_{<=}} C_{<>} \sqcup C_{><} \sqcup\phantom{,\sqcup C_{<=}} C_{>>}.
	\end{align*}
	Then we have
	\begin{align}
	\begin{split}
	\label{eq:diff_csf+}
	&\csf[h_+] - \csf\\
		&= (q^2-q)\sum_{\kappa \in C_{<<}} z_\kappa q^{\asc_-(\kappa)}
		- q \sum_{\kappa \in C_{<=}} z_\kappa q^{\asc_-(\kappa)}
		+ (q-1)\sum_{\kappa \in C_{><}} z_\kappa q^{\asc_-(\kappa)}
		- \sum_{\kappa \in C_{>=}} z_\kappa q^{\asc_-(\kappa)}
	\end{split}\\
	\begin{split}
	\label{eq:diff_csf}
		&\csf - \csf[h_-]\\
		&= (q-1)\sum_{\kappa \in C_{<<}} z_\kappa q^{\asc_-(\kappa)}
		+ (q-1)\sum_{\kappa \in C_{<=}} z_\kappa q^{\asc_-(\kappa)}
		+ (q-1)\sum_{\kappa \in C_{<>}} z_\kappa q^{\asc_-(\kappa)}\\
		&\quad - \sum_{\kappa \in C_{=<}} z_\kappa q^{\asc_-(\kappa)}
		- \sum_{\kappa \in C_{=>}} z_\kappa q^{\asc_-(\kappa)}
	\end{split}
	\end{align}
	By a similar argument to that for \eqref{eq:color_sum} and the bijection $C_{<=}  \to C_{=<}$ given by the composition with $\tau$, we have
	\[
		q\sum_{\kappa \in C_{<=}} z_\kappa q^{\asc_-(\kappa)}
		- \sum_{\kappa \in C_{=<}} z_\kappa q^{\asc_-(\kappa)}
		= 0.
	\]
	Hence these terms in $\eqref{eq:diff_csf}$ are cancelled and we obtain
	\begin{align}
	\begin{split}
	\label{eq:diff_csf2}
		&\csf - \csf[h_-]\\
		&= (q-1)\sum_{\kappa \in C_{<<}} z_\kappa q^{\asc_-(\kappa)}
		-\sum_{\kappa \in C_{<=}} z_\kappa q^{\asc_-(\kappa)}
		+ (q-1)\sum_{\kappa \in C_{<>}} z_\kappa q^{\asc_-(\kappa)}
		- \sum_{\kappa \in C_{=>}} z_\kappa q^{\asc_-(\kappa)}.
	\end{split}
	\end{align}
	Then we obtain \eqref{eq:ml_csf} from \eqref{eq:diff_csf+} and \eqref{eq:diff_csf2} by the two bijections $C_{<>} \to C_{><}$ and $C_{=>}  \to C_{>=}$ given by the composition with $\tau$.
	
	The same argument shows the modular relation for modular triples of type $\mathrm{(R)}$.
\end{proof}

\bigskip
\noindent \textbf{Acknowledgements.} 
This work was partly supported by MEXT Promotion of Distinctive Joint Research Center Program JPMXP0723833165.
The first author is supported in part by JSPS Grant-in-Aid for Young Scientists: 23K12981.
The second author is supported in part by JSPS Grant-in-Aid for Scientific Research 22K03292 and the HSE University Basic Research Program.

\end{document}